\numberwithin{equation}{section}
\newtheorem{thm}{Theorem}[section]
\newtheorem{lemma}[thm]{Lemma}
\newtheorem{prop}[thm]{Proposition}
\newtheorem{cor}[thm]{Corollary}
{\theorembodyfont{\rmfamily}
\newtheorem{defn}[thm]{Definition}
\newtheorem{example}[thm]{Example}

\newtheorem{rmk}[thm]{Remark}
}
\newcommand{\qed}{\hfill \mbox{\raggedright \rule{.07in}{.1in}}}
\newenvironment{proof}{\vspace{1ex}\noindent{\bf
Proof}\hspace{0.5em}}{\hfill\qed\vspace{1ex}}
\newenvironment{pfof}[1]{\vspace{1ex}\noindent{\bf Proof of
#1}\hspace{0.5em}}{\hfill\qed\vspace{1ex}}
\newcommand{\R}{{\mathbb R}}
\newcommand{\T}{{\mathbb T}}
\newcommand{\Z}{{\mathbb Z}}
\newcommand{\hC}{{\widehat C}}
\newcommand{\hF}{{\widehat F}}
\newcommand{\hH}{{\widehat H}}
\newcommand{\hM}{{\widehat M}}
\newcommand{\hZ}{{\widehat Z}}
\newcommand{\hm}{{\widehat m}}
\newcommand{\hmu}{{\widehat \mu}}
\newcommand{\hpi}{{\widehat \pi}}
\newcommand{\tV}{{\widetilde V}}
\newcommand{\tW}{{\widetilde W}}
\newcommand{\tv}{{\widetilde v}}
\newcommand{\hz}{{\hat z}}
\newcommand{\bF}{{\bar F}}
\newcommand{\bDelta}{{\bar \Delta}}
\newcommand{\bfD}{{\bar f_\Delta}}
\newcommand{\bZ}{{\bar Z}}
\newcommand{\bmu}{{\bar \mu}}
\newcommand{\eps}{\epsilon}
\newcommand{\Leb}{\operatorname{Leb}}
\title{Nonstandard functional central limit theorem for nonuniformly hyperbolic dynamical systems, including Bunimovich stadia}
\author{Yuri Lima
\thanks{Instituto de Matemática e Estatística, Universidade de São Paulo, Rua do Matão, 1010, Cidade Universitária, 05508-090, São Paulo -- SP, Brazil}
\and Carlos Matheus
\thanks{CNRS \& \'Ecole Polytechnique, CNRS (UMR 7640), 91128, Palaiseau, France}
 \and Ian Melbourne
  \thanks{Mathematics Institute, University of Warwick, Coventry CV4 7AL, United Kingdom}
}
\date{3 July 2025; revised 16 March 2026}
\begin{document}

\maketitle

 \begin{abstract}
We consider a class of nonuniformly hyperbolic dynamical systems
with a first return time satisfying a
central limit theorem (CLT) with
nonstandard normalisation $(n\log n)^{1/2}$. 
For such systems (both maps and flows)
we show that it automatically follows that
 the functional central limit theorem or weak invariance principle (WIP) with 
normalisation $(n\log n)^{1/2}$ holds for H\"older observables.

Our approach streamlines certain arguments in the literature. Applications include various examples from billiards, geodesic flows and intermittent dynamical systems. In this way, we unify existing results as well as obtaining new results.
In particular, we deduce the WIP with nonstandard normalisation for Bunimovich stadia as an immediate consequence of the corresponding CLT proved by B{\'a}lint \& Gou\"ezel.
 \end{abstract}

\section{Introduction} 
\label{sec:intro}

The central limit theorem (CLT) is well-known 
for a wide class of nonuniformly hyperbolic dynamical systems, going back to
work of~\cite{Ratner73,Sinai60} for H\"older observables of Anosov and 
Axiom~A diffeomorphisms and flows. The CLT also holds for nonuniformly expanding/hyperbolic systems with summable decay of correlations modelled by Young towers~\cite{Young98,Young99}. In such examples, the functional CLT, known also as the weak invariance principle (WIP), holds as well.

We recall the statements of the CLT and WIP.
Let $(X,\mu)$ be a probability space, $f:X\to X$ a measurable map, and $v:X\to\R$ an integrable observable. 
Let $v_n=\sum_{j=0}^{n-1}(v-\int v\,d\mu)\circ f^j$.
The CLT is the property that $n^{-1/2}v_n$ converges in distribution to a normally distributed random variable with mean zero and variance $\sigma^2>0$. That is
\[
\lim_{n\to\infty}\mu\big\{x\in X:n^{-1/2}v_n(x)\le c\big\}
=\int_{-\infty}^c (2\pi\sigma^2)^{-1/2} e^{-y^2/(2\sigma^2)}\,dy
\]
for all $c\in\R$.
Next, define $W_n(t)= n^{-1/2}v_{[nt]}$
and linearly interpolate to obtain a sequence of continuous functions $W_n\in C[0,1]$.
The WIP is the property that $W_n$ converges weakly to $\sigma W$ in $C[0,1]$ where $W$ is unit Brownian motion.
Since continuous functions preserve weak convergence, the WIP immediately implies the CLT (take $\chi:C[0,1]\to\R$, $\chi(g)=g(1)$ and notice that $\chi(W_n)=n^{-1/2}v_n$). Hence the WIP is a far-reaching generalisation of the CLT.

Dating back to the early 2000s, many examples of physical significance were shown to violate the CLT and WIP, leading to \emph{superdiffusive} behaviour where the required normalisation is stronger than $n^{1/2}$.
The simplest examples were the intermittent maps of Pomeau-Manneville~\cite{PomeauManneville80} which are discussed in further detail in Section~\ref{sec:PM}.
Consider a dynamical system $f:[0,1]\to[0,1]$ which is expanding on $(0,1]$ but with a neutral fixed point at $0$ satisfying $f'(0)=1$. Suppose further that $f(x)\approx x^{1+1/\alpha}$ for $x$ near $0$, where $\alpha>0$ is a parameter. As $\alpha$ decreases, the neutral fixed point becomes ``stickier'' and trajectories spend a large number of iterates before escaping from a neighbourhood of $0$. One might expect that the expanding dynamics away from $0$ dominates for $\alpha$ large and that the fixed point at $0$ dominates for $\alpha$ small. Indeed, it turns out under extra assumptions that the CLT and WIP with the standard normalisation $n^{1/2}$ hold for $\alpha>2$ but that the dynamics is superdiffusive for $\alpha{\le 2}$~\cite{Gouezel04,Zweimuller03,DedeckerMerlevede09}. At the borderline case $\alpha=2$, the CLT and WIP hold as before but with normalisation $(n\log n)^{1/2}$.

Beyond intermittent maps, there are important examples from billiards such as
the Buminovich stadium~\cite{Bunimovich79,BalintGouezel06}, billiards with cusps~\cite{BalintChernovDolgopyat11}, and infinite horizon Lorentz gases~\cite{SzaszVarju07} (all described in more detail in the body of the paper) where the CLT and WIP with normalisation $n^{1/2}$ break down but hold with the superdiffusive normalisation $(n\log n)^{1/2}$. In all these examples, the mechanism for superdiffusivity is clear; for example, in the case of billiards with cusps, trajectories may be trapped in the cusps for a large number of iterates.

From now on, we distinguish between standard and nonstandard CLTs and WIPs:
\begin{defn}
Let $a_n=n^{1/2}$ or $a_n=(n\log n)^{1/2}$.
Define
\begin{equation} \label{eq:Wn}
W_n(t)= a_n^{-1}\sum_{j=0}^{nt-1}(v-{\textstyle\int}_X v\,d\mu)\circ f^j,
\quad t=0,\tfrac1n,\tfrac2n,\dots,1,
\end{equation} 
and linearly interpolate to obtain $W_n\in C[0,1]$.
We say that \emph{$v$ satisfies a weak invariance principle WIP with variance
$\sigma^2$ and normalisation $a_n$} if $W_n\to_\mu \sigma W$ in $C[0,1]$ as $n\to\infty$.\footnote{
We write $\to_{\mu}$ to denote weak convergence with respect to a specific 
probability measure $\mu$ on the left-hand side. So $A_n\to_\mu A$ means that $A_n$ is a family of
random variables on a probability space $(X,\mu)$ and $A_n\to_w A$.}
It is the \emph{standard WIP} if $a_n=n^{1/2}$ and
\emph{nonstandard WIP} if $a_n=(n\log n)^{1/2}$.
If $\sigma=0$, the WIP is called \emph{degenerate}.

Similarly, we say that \emph{$v$ satisfies a standard/nonstandard CLT with variance $\sigma^2$} if
$a_n^{-1}\sum_{j=0}^{n-1}(v-\int_X v\,d\mu)\circ f^j\to_\mu N(0,\sigma^2)$ as $n\to\infty$.
\end{defn}

In most of the examples mentioned above, when the nonstandard CLT holds the nonstandard WIP has also been shown to hold.
An exception is the Bunimovich stadium~\cite{Bunimovich79}, where the nonstandard CLT was proved by~\cite{BalintGouezel06} but the nonstandard WIP was not previously proved. 

\begin{example}[Bunimovich stadia~\cite{Bunimovich79}]
As described in more detail in Section~\ref{sec:billiard},
these are billiards in a convex domain enclosed by
two semicircles and
two parallel line segments tangent to the semicircles.
The mechanism for superdiffusivity here is the existence of a continuous family of period two points corresponding to
trajectories bouncing perpendicular to the straight edges.

We consider  H\"older observables~$v$ (more generally, dynamically H\"older observables as defined in Section~\ref{sec:UDS}), and let $I_v$ denote the average of $v$ 
over trajectories bouncing perpendicular to the straight edges.
(Intuitively, $v$ ``sees'' the mechanism for superdiffusivity precisely when $I_v\neq0$.)
It was shown by B\'alint \& Gou\"ezel~\cite{BalintGouezel06} that $v$ satisfies a nonstandard CLT for $I_v\neq0$ and a standard CLT for $I_v=0$.

A consequence of the results in this paper is that for Bunimovich stadia the nonstandard WIP holds for 
(dynamically) H\"older observables $v$ with
$I_v\neq0$. The analogous result for the billiard flow is also shown to hold.
\end{example}

In this paper, we give a unifying approach for nonuniformly hyperbolic systems modelled by Young towers, showing how to establish the nonstandard WIP as a consequence of the nonstandard CLT.
In addition to the Bunimovich stadium example,
we also prove for the first time the nonstandard WIP for a family of multidimensional nonuniformly expanding nonMarkovian nonconformal intermittent maps~\cite{EMV21} and for certain almost Anosov flows~\cite{Bruin23,BruinTerhesiuTodd21}.
Existing examples that we recover 
include one-dimensional intermittent maps~\cite{DedeckerMerlevede09} 
and billiards with cusps~\cite{BalintChernovDolgopyat11}.

This work was motivated by our study of certain geodesic flows on surfaces with nonpositive curvature~\cite{LMMprep2}. For these examples, the nonstandard CLT follows from a general approach initiated by~\cite{BalintChernovDolgopyat11} in their study of billiards with cusps. In contrast, their proof of the nonstandard WIP relies on additional \emph{ad hoc} arguments. We show here that the nonstandard WIP in~\cite{BalintChernovDolgopyat11} is immediate in such situations given the nonstandard CLT.
In particular, the example-specific details in~\cite[Section~8]{BalintChernovDolgopyat11} can be dispensed with.
Hence we are able to prove the nonstandard WIP for the geodesic flow example in~\cite{LMMprep2}.
Moreover, our method is somewhat independent of how the nonstandard CLT is proved and hence we are able to cover the much more difficult situation of Bunimovich stadia.\footnote{These extra difficulties are still present in the proof of the nonstandard CLT~\cite{BalintGouezel06} but, by the method presented here, play no further role when passing to the nonstandard WIP.}

The remainder of this paper is organised as follows.
Sections~\ref{sec:induce} and~\ref{sec:GM} are preliminary in nature, summarising
how to induce limit laws for maps and flows and establishing a nonstandard WIP for Gibbs-Markov maps.
In Section~\ref{sec:map}, we state and prove our main result on nonstandard limit laws for nonuniformly hyperbolic systems modelled by Young towers. Results for flows are given in Section~\ref{sec:flow}. Sections~\ref{sec:PM} and~\ref{sec:billiard} contain the applications to intermittent maps, almost Anosov flows and billiards
(while the discussion of geodesic flows on certain nonpositively curved surfaces is left to~\cite{LMMprep2}).

\vspace{-2ex}
\paragraph{Notation}
We use ``big O'' and $\ll$ notation interchangeably, writing $a_n=O(b_n)$ or $a_n\ll b_n$
if there are constants $C>0$, $n_0\ge1$ such that
$a_n\le Cb_n$ for all $n\ge n_0$.
We write $a_n\approx b_n$ if $a_n\ll b_n$ and $b_n\ll a_n$.
As usual, $a_n=o(b_n)$ means that $a_n/b_n\to0$ and
$a_n\sim b_n$ means that $a_n/b_n\to1$. We denote the integer part of $x$
by $[x]$.

\section{Inducing statistical limit laws}
\label{sec:induce}

In this section, we establish results for inducing the CLT/WIP with standard/nonstandard normalisation for discrete/continuous time. Many results of this type already exist in the literature~\cite{BalintGouezel06,ChazottesGouezel07,GM16,Gouezel07,KM16,MT04,MV20,MZ15,Ratner73,Zweimuller03} but are formulated for slightly different situations.

We begin with an inducing theorem for flows in Section~\ref{sec:induceflow} before covering the simpler situation for maps in Section~\ref{sec:inducemap}.
In Section~\ref{sec:I}, we discuss how to verify certain hypotheses.

\subsection{Inducing for flows}
\label{sec:induceflow}

Let $f:X\to X$ be an ergodic measure-preserving transformation on a probability space $(X,\mu)$ and let 
$r:X\to\R^+$ be an integrable roof function.
Define the suspension
\[
X^r=\{(x,u)\in X\times[0,\infty):0\le u\le r(x)\}/\sim\, ,
\qquad
(x,r(x))\sim (fx,0),
\]
and the suspension flow $g_t(x,u)=(x,u+t)$ computed modulo identifications.
A $g_t$-invariant probability measure is given by
$\mu^r=(\mu\times{\rm Lebesgue})/\bar r$ 
where $\bar r=\int_X r\,d\mu$.

Let $v:X^r\to\R$ be an integrable observable with
$\int_{X^r}v\,d\mu^r=0$.
Define
$v^X:X{\to\R}$ and $Q:X^r\to\R$ by
\[
v^X(x)=\int_0^{r(x)} v(x,u)\,du,
 \qquad
Q(x,u)=\int_0^u v(x,s)\,ds,
\]
as well as the Birkhoff integrals/sums 
\[
v_t=\int_0^tv\circ g_s\,ds:X^r\to\R, \qquad
v^X_n=\sum_{j=0}^{n-1}v^X\circ f^j:X\to\R.
\]
Also, define processes $W_n\in C[0,\infty)$ on $X^r$ and $W^X_n\in D[0,\infty)$ on $X$, setting
\[
W_n(t)=a_n^{-1}v_{nt}\, , \qquad
W^X_n(t)=a_n^{-1}v^X_{[nt]}\, ,
\]
where $a_n>0$ is a sequence satisfying $\lim\limits_{n\to\infty}a_n=\infty$
and such that
$\sup_{n\ge 1}a_{[\lambda n]}/a_n<\infty$ for all $\lambda>0$.
As in the introduction (Section~\ref{sec:intro}), we say that $v^X$ satisfies a WIP with variance $\sigma^2$ and normalisation $a_n$ if $W^X_n\to_\mu \sigma W$.
Similarly, we say that $v$ satisfies a WIP with variance $\sigma^2$ and normalisation $a_n$ if $W_n\to_{\mu^r} \sigma W$.

\begin{thm} \label{thm:induceflow}
Suppose that:
\begin{enumerate}[{\rm (I1)}]
\item[{\rm (I1)}] $v^X$ satisfies a WIP with variance $\sigma^2$ and normalisation $a_n$ on $(X,\mu)$, and
\item[{\rm (I2)}] $a_n^{-1}\max\limits_{0\le j\le n}|v^X|\circ f^j\to_\mu 0$ as $n\to\infty$.
\end{enumerate}
Then
$v$ satisfies a CLT with variance $\bar r^{-1}\sigma^2$ and normalisation $a_n$
on $(X^r,\mu^r)$.
If moreover
\begin{enumerate}[{\rm (I1)}]
\item[{\rm (I3)}]
$a_n^{-1}\sup\limits_{t\in[0,n]}|Q|\circ g_t \to_{\mu^r} 0$ as $n\to\infty$,
\end{enumerate}
then $v$ satisfies a WIP with variance $\bar r^{-1}\sigma^2$ and normalisation $a_n$
on $(X^r,\mu^r)$.
\end{thm}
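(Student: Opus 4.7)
The strategy is to link the flow's Birkhoff integral $v_t$ to a base-map Birkhoff sum $v^X_n$ via the standard lap-counting identity, and then invoke (I1) with a random-index argument driven by Birkhoff's ergodic theorem for the roof $r$. For $(x,u)\in X^r$, let $N_t(x,u)=\max\{n\ge 0:\sum_{j=0}^{n-1}r(f^jx)\le u+t\}$. Partitioning $[0,t]$ into the excursions between successive returns to the base gives
\begin{equation*}
v_t(x,u)=v^X_{N_t(x,u)}(x)+Q(g_t(x,u))-Q(x,u),
\end{equation*}
with the $Q$-terms accounting for the incomplete excursions at the two endpoints. Birkhoff's theorem applied to the integrable function $r$ yields $N_t/t\to 1/\bar r$ almost surely, uniformly on compact intervals of $t$, so that $s\mapsto N_{ns}(\cdot)/n$ converges $\mu^r$-a.s.\ to the deterministic continuous time change $\psi(s)=s/\bar r$.

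For the CLT I would evaluate the identity at $t=n$ and divide by $a_n$. The initial term $a_n^{-1}Q(x,u)$ vanishes pointwise, while $a_n^{-1}|Q(g_n(x,u))|$ is dominated by $a_n^{-1}(|v|)^X(f^{N_n}x)$ with $N_n\le(\bar r^{-1}+o(1))n$; this tends to zero in probability by (I2) applied to the integrable function $(|v|)^X$. The main term $a_n^{-1}v^X_{N_n}$ I would handle by a random-index trick: write it as $(a_{N_n}/a_n)(a_{N_n}^{-1}v^X_{N_n})$. Since both $n^{1/2}$ and $(n\log n)^{1/2}$ are regularly varying of index $1/2$ and $N_n\sim n/\bar r$, the prefactor converges a.s.\ to $\bar r^{-1/2}$, while tightness of the sequence $(W^X_n)$ furnished by (I1) together with $|N_n-[n/\bar r]|=o(n)$ allows the random index $N_n$ to be replaced by $[n/\bar r]$ (Billingsley/Anscombe). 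Then (I1) evaluated at $s=1$ yields convergence of the second factor to $N(0,\sigma^2)$, and hence the CLT with variance $\bar r^{-1}\sigma^2$.

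To upgrade to the WIP, I would run the argument at the process level. With $t=ns$, $s\in[0,1]$,
\begin{equation*}
W_n(s)=(a_{[n/\bar r]}/a_n)\,W^X_{[n/\bar r]}(\psi_n(s))+E_n(s),
\end{equation*}
where $\psi_n(s)=N_{ns}/[n/\bar r]$ converges $\mu^r$-a.s.\ uniformly to the identity by the ergodic theorem above, and $E_n$ gathers the boundary $Q$-contributions, satisfying $\sup_{s\in[0,1]}|E_n(s)|\le 2a_n^{-1}\sup_{t\in[0,n]}|Q|\circ g_t+a_n^{-1}|Q(x,u)|\to 0$ in probability by (I3). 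Joint convergence $(W^X_{[n/\bar r]},\psi_n)\to_{\mu^r}(\sigma W,\mathrm{id})$ follows from (I1) and the a.s.\ convergence of $\psi_n$ via Slutsky; the continuous mapping theorem for composition in $D[0,1]$, which is continuous at pairs whose first coordinate is continuous and whose second is continuous and nondecreasing, then delivers $W^X_{[n/\bar r]}\circ\psi_n\to_{\mu^r}\sigma W$ in $D[0,1]$. Since the limit $\sigma W$ is continuous, this strengthens to uniform convergence on $[0,1]$, and combining with $E_n\to 0$ yields $W_n\to_{\mu^r}\bar r^{-1/2}\sigma W$ in $C[0,1]$.

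The main obstacle, and the reason (I3) is imposed beyond (I2), is the uniform-in-$s$ control of the excursion remainder $Q\circ g_{ns}$. Without (I3) tightness of $(W_n)$ in $C[0,1]$ would fail even though the finite-dimensional distributions still converge; with (I3) in hand, the Skorohod-to-uniform upgrade and the continuity of composition are essentially automatic.
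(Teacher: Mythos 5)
Your decomposition $v_t=v^X_{N_t}+Q\circ g_t-Q$, the lap-number asymptotics $N_{ns}/n\to s/\bar r$, and the composition/continuous-mapping step are all exactly the paper's route. But there is a genuine gap at the heart of your argument: a change-of-measure issue that you never address, and which is in fact the real reason hypothesis (I2) is imposed. Hypothesis (I1) gives weak convergence of $W_n^X$ as a process on $(X,\mu)$. In your identity $W_n(s)(x,u)=(a_{[n/\bar r]}/a_n)\,W^X_{[n/\bar r]}(\psi_n(s))(x)+E_n(s)$, the point $(x,u)$ is distributed according to $\mu^r$, so the base coordinate $x$ has marginal law $\hmu$ with $d\hmu/d\mu=\bar r^{-1}r$, \emph{not} $\mu$. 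Weak convergence in distribution is not preserved under an equivalent change of measure in general, so you cannot simply "apply (I1)" to the first factor. The paper bridges this with Proposition~\ref{prop:Z07} (which is where (I2) enters: it gives $\sup_{t\le T}|W_n^X(t)\circ f-W_n^X(t)|\to_\mu 0$) combined with Zweim\"uller's mixing-limit-theorem result \cite{Zweimuller07}, which transfers $W_n^X\to_\mu\sigma W$ to $W_n^X\to_{\hmu}\sigma W$. Your proposal uses (I2) only for a boundary term and nowhere performs this transfer, so the key step "(I1) and Slutsky give $(W^X_{[n/\bar r]},\psi_n)\to_{\mu^r}(\sigma W,\mathrm{id})$" is unjustified as written.

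A secondary error: for the CLT you bound $a_n^{-1}|Q(g_n(x,u))|$ by $a_n^{-1}(|v|)^X(f^{N_n}x)$ and invoke "(I2) applied to $(|v|)^X$". Condition (I2) concerns $|v^X|=\bigl|\int_0^{r(x)}v(x,u)\,du\bigr|$, not $(|v|)^X=\int_0^{r(x)}|v(x,u)|\,du$; these can differ drastically, and (I2) for one does not imply it for the other. The correct (and simpler) argument, used in the paper, is that $\mu^r$ is $g_n$-invariant, so $Q\circ g_n$ has the same law as $Q$, whence $a_n^{-1}Q\circ g_n\to_{\mu^r}0$ with no hypothesis beyond integrability. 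With the change-of-measure step inserted and the boundary term handled by stationarity, the rest of your argument (random-index replacement, composition in $D[0,1]$ at a continuous limit, and (I3) for the uniform control of $E_n$) goes through and matches the paper.
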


\begin{rmk}
Condition~(I3) provides control during excursions in $X^r$ from $X$.
\end{rmk}

The remainder of this section is devoted to the proof of Theorem~\ref{thm:induceflow}.
It is convenient to work with
the Skorohod spaces $D[0,T]$ and $D[0,\infty)$ of real-valued c\`adl\`ag
functions (right-continuous $\psi(t^{+})=\psi(t)$ with left-hand limits $\psi(t^{-})$)
on the respective interval, with the sup-norm topology in the case of
$D[0,T]$ and the topology of uniform convergence on compact subsets in the case of $D[0,\infty)$.
(Alternatively, one could work with the space of continuous functions, replacing certain piecewise constant functions by piecewise linear interpolants throughout.)

\begin{prop} \label{prop:Z07}
Assume that condition~{\em (I2)} holds.  Then
\[
	\sup_{t\in[0,T]}|W^X_n(t)\circ f-W^X_n(t)|\to_\mu 0
\quad\text{for all $T>0$.}
\]
\end{prop}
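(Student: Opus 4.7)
The plan is to reduce the supremum to a single-term bound and then apply hypothesis (I2) directly.

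First I would use the definition to rewrite the increment $W_n^X(t)\circ f - W_n^X(t)$ as a telescoping sum. Since $W_n^X(t) = a_n^{-1}\sum_{j=0}^{[nt]-1} v^X\circ f^j$, composing with $f$ shifts the indices by one, so all but the endpoint terms cancel:
\[
W_n^X(t)\circ f - W_n^X(t) = a_n^{-1}\bigl(v^X\circ f^{[nt]} - v^X\bigr).
\]
Taking a supremum over $t\in[0,T]$ then gives the pointwise estimate
\[
\sup_{t\in[0,T]}|W_n^X(t)\circ f - W_n^X(t)| \le 2\,a_n^{-1}\max_{0\le j\le [nT]}|v^X|\circ f^j.
\]

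Next I would match this with condition (I2), which controls the right-hand side along the sequence $\max_{0\le j\le n}|v^X|\circ f^j/a_n$. The only wrinkle is that $n$ is replaced by $[nT]$, so I would write $a_n^{-1}\max_{0\le j\le [nT]}|v^X|\circ f^j = (a_{[nT]}/a_n)\cdot a_{[nT]}^{-1}\max_{0\le j\le [nT]}|v^X|\circ f^j$ and invoke the standing assumption $\sup_{n\ge1} a_{[\lambda n]}/a_n<\infty$ (applied with $\lambda=T$) to absorb the first factor into a constant. The second factor converges to $0$ in $\mu$-probability by (I2) with $n$ replaced by $[nT]\to\infty$, so the whole expression converges to $0$ in probability, which implies convergence to $0$ in $D[0,T]$.

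There is no real obstacle here: once the telescoping identity is in hand the result is just a rescaling of (I2). The one thing to be careful about is the bookkeeping of $[nt]$ versus $nt$ and ensuring that the normalising sequence $a_n$ is comparable to $a_{[nT]}$, which is precisely the point of the regularity assumption on $a_n$ stated just before Theorem~\ref{thm:induceflow}.
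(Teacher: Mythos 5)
Your proof is correct and follows the same route as the paper: the telescoping identity $W_n^X(t)\circ f-W_n^X(t)=a_n^{-1}(v^X\circ f^{[nt]}-v^X)$, the bound by $2a_n^{-1}\max_{0\le j\le [nT]}|v^X|\circ f^j$, and the appeal to (I2) together with the boundedness of $a_{[nT]}/a_n$. No substantive differences.
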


\begin{proof}
Note that $W^X_n(t)\circ f-W^X_n(t)=a_n^{-1}(v^X\circ f^{[nt]}-v^X)$. Hence
\[
\sup_{t\in[0,T]}|W^X_n(t)\circ f-W^X_n(t)|\le 2a_n^{-1}\max_{0\le j\le [nT]}|v^X|\circ f^j\to_\mu 0
\]
where we used (I2) and that $a_{[nT]}/a_n$ is bounded.
\end{proof}

Define the \emph{lap numbers} $N_t=N_t(x,u)=\max\{n\geq0:\sum_{j=0}^{n-1}r(f^jx)\leq u+ t\}$ on $X^r$ for $t\ge0$. Set
\[
\psi_n(t)=\frac{N_{nt}}{n} : X^r\to\R\,, \qquad \bar\psi(t)=\frac{t}{\bar r} \in\R.
\]
Also, define the processes $\widehat{W_n^X}\in D[0,\infty)$ on $X^r$ by setting
$\widehat{W_n^X}(x,u)=W_n^X(x)$.

\begin{prop} \label{prop:lap}
Assume that conditions~{\em (I1)} and~{\em (I2)} hold. Then
	$\widehat{W_n^X}\circ \psi_n\to_{\mu^r} \bar r^{-1/2}\sigma  W$ in $D[0,\infty)$.
\end{prop}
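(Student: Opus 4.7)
The plan is to combine three ingredients: (a) transfer the WIP hypothesis (I1) from $(X,\mu)$ to $(X^r,\mu^r)$, obtaining $\widehat{W_n^X}\to_{\mu^r}\sigma W$ in $D[0,\infty)$; (b) show that the random time-change $\psi_n$ converges uniformly on compacta to the deterministic $\bar\psi(t)=t/\bar r$; and (c) apply a composition/continuous mapping argument together with the Brownian scaling $W(\cdot/\bar r)\stackrel{d}{=}\bar r^{-1/2}W$.

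For step (a), the key point is that the projection $\pi:X^r\to X$, $\pi(x,u)=x$, pushes $\mu^r$ forward to the measure $\bar r^{-1} r\,d\mu$ on $X$, which is absolutely continuous with respect to $\mu$. By the functional version of Zweim\"uller's theorem, weak convergence under $\mu$ of a sequence of random processes extends to weak convergence under any probability measure $\ll\mu$ provided the sequence is asymptotically $f$-invariant in the appropriate sense. Proposition~\ref{prop:Z07} (which already uses hypothesis~(I2)) supplies exactly this functional shift-invariance: $\sup_{t\in[0,T]}|W_n^X(t)\circ f-W_n^X(t)|\to_\mu 0$ for every $T>0$. So (I1) upgrades to $\widehat{W_n^X}\to_{\mu^r}\sigma W$.

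For step (b), Birkhoff's ergodic theorem applied to $r$ yields $\frac{1}{n}\sum_{j=0}^{n-1}r\circ f^j\to \bar r$ $\mu$-a.e., which lifts to $X^r$ and gives $N_t(x,u)/t\to 1/\bar r$ $\mu^r$-a.e. Hence $\psi_n(t)\to t/\bar r$ pointwise a.e.; since each $\psi_n$ is nondecreasing and the limit $\bar\psi$ is continuous, convergence is automatically uniform on every compact interval of $[0,\infty)$.

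For step (c), I would invoke joint convergence $(\widehat{W_n^X},\psi_n)\to_{\mu^r}(\sigma W,\bar\psi)$ (the second coordinate being deterministic means joint convergence follows from marginal convergence) together with continuity of the composition map $(\phi,\lambda)\mapsto \phi\circ\lambda$ at points $(\phi,\bar\psi)$ with $\bar\psi$ continuous and strictly increasing (a standard fact for the Skorohod $J_1$ topology, essentially Whitt's composition theorem). This yields $\widehat{W_n^X}\circ\psi_n\to_{\mu^r}\sigma W\circ\bar\psi$, and Brownian scaling identifies the right-hand side in distribution with $\bar r^{-1/2}\sigma W$.

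The main obstacle is step (a): the passage from $\mu$ to the absolutely continuous $\mu^r\circ\pi^{-1}$ for process-valued convergence in $D[0,\infty)$. Finite-dimensional Zweim\"uller is classical, but the functional strengthening needs precisely Proposition~\ref{prop:Z07}, explaining why (I2) appears in the hypotheses. Step (c) is also slightly subtle because $\widehat{W_n^X}$ is c\`adl\`ag rather than continuous, but the deterministic, continuous, strictly increasing limit $\bar\psi$ puts us in the regime where Skorohod composition is well-behaved.
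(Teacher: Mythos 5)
Your proposal is correct and follows essentially the same route as the paper: transfer of (I1) from $\mu$ to the absolutely continuous measure $\bar r^{-1}r\,d\mu$ via Zweim\"uller's theorem using Proposition~\ref{prop:Z07}, uniform convergence of the lap-number time change $\psi_n$ to $\bar\psi$, and the continuous mapping theorem for composition combined with Brownian scaling. The only cosmetic difference is that the paper works with the sup-norm topology on the c\`adl\`ag spaces (handling the resulting non-separability as in~\cite[Proposition~A.4]{GM16}) rather than invoking Whitt's $J_1$ composition theorem.
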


\begin{proof}
	First, we show that $\widehat{W_n^X}\to_{\mu^r} \sigma W$.
By condition~(I1), $W_n^X\to_\mu \sigma W$.
Define the absolutely continuous probability measure $\hmu$ on $X$ by
$d\hmu/d\mu=\bar r^{-1}r$.
By the ergodicity of $\mu$, Proposition~\ref{prop:Z07} and~\cite[Theorem~1]{Zweimuller07},
we can pass weak convergence of $W_n^X$ from $\mu$ to $\hmu$ yielding that
$W_n^X\to_\hmu \sigma W$.
But 
\[
\mu^r(\widehat{W_n^X}\in E)=\bar r^{-1}\int_X r1_{\{W_n^X\in E\}}\,d\mu
=\hmu(W_n^X\in E)
\]
for all Borel sets $E\subset D[0,\infty)$,
so $\widehat{W_n^X}\to_{\mu^r}\sigma W$.

Second, it follows from the definition of the lap number and the ergodicity of $\mu$ that
$\lim\limits_{t\to\infty} N_t(x,u)/t=1/\bar r$
for $\mu$-a.e.\ $x$ and every $u$.
Hence
\[
	\psi_n(t)(x,u)=N_{nt}(x,u)/n=tN_{nt}(x,u)/(nt)\to t/\bar r= \bar \psi(t)
\]
for $\mu$-a.e.\ $x$ and every $u$, $t$ as $n\to\infty$.
It follows that
$\sup_{t\in [0,T]}|\psi_n(t)-\bar\psi(t)|\to0$ $\mu^r$-a.e.
Hence\footnote{There is a technical issue since the sup-norm topology on c\`adl\`ag  spaces is not separable, but this is easily resolved as in~\cite[Proposition~A.4]{GM16}.}
	$(\widehat{W_n^X},\psi_n)\to_{\mu^r}(\sigma W,\bar\psi)$.
By the continuous mapping theorem,
\[
\widehat{W_n^X}\circ\psi_n
	\to_{\mu^r} \sigma W\circ\bar\psi
	=\bar r^{-1/2}\sigma W, 
\]
as required.
\end{proof}

\begin{pfof}{Theorem~\ref{thm:induceflow}}
By the definition of lap number, we have the decomposition
\[
v_t(x,u)=v^X_{N_t(x,u)}(x)+Q(g_t(x,u))-Q(x,u).
\]
Hence,
\[
	W_n(t)(x,u)=a_n^{-1} \big(v^X_{N_{nt}(x,u)}(x)+ Q(g_{nt}(x,u))-Q(x,u)\big).
\]
But $a_n^{-1}v^X_{N_{nt}(x,u)}(x)=\widehat{W_n^X}(x,u)\circ\psi_n(t)(x,u)$, so
\begin{equation} \label{eq:vt}
W_n=\widehat{W^X_n}\circ \psi_n+F_n
\quad\text{where}\quad
F_n(t)=a_n^{-1} (Q\circ g_{nt}-Q).
\end{equation}
Clearly,
	$F_n(1)\to_{\mu^r} 0$. Also, if 
condition~(I3) holds, then
	$\sup_{t\in[0,T]}|F_n(t)|\to_{\mu^r} 0$ for all $T>0$.
Hence, the result follows from~\eqref{eq:vt} and
Proposition~\ref{prop:lap}.~
\end{pfof}

\subsection{Inducing for maps}
\label{sec:inducemap}

A similar result holds for discrete suspensions (towers).
Let $(F,X,\mu)$ be an ergodic measure-preserving transformation and $r:X\to\Z^+$ an integrable function.
Define the tower  
$X^r=\{(x,\ell)\in X\times\Z:0\le \ell\le r(x)\}$ and tower map
\[
f:X^r\to X^r\,, \qquad
f(x,\ell)=\begin{cases} (x,\ell+1) , &0\le \ell<r(x)-1 \\
(Fx,0) , &\ell=r(x)-1 
\end{cases}\,,
\]
with ergodic invariant probability measure $\mu^r=(\mu\times{\rm counting})/\bar r$ where $\bar r=\int_X r\,d\mu$.

Let $v:X^r\to\R$ be an integrable observable with
$\int_{X^r}v\,d\mu^r=0$.
Define
$v^X:X{\to\R}$ and $Q:X^r\to\R$ by
\[
v^X(x)=\sum_{\ell=0}^{r(x)-1} v(x,\ell)\,,
 \qquad
Q(x,\ell)=\sum_{k=0}^\ell v(x,k).
\]

\begin{thm} \label{thm:inducemap}
The statement of Theorem~{\em \ref{thm:induceflow}} holds true in this context
with condition~{\em (I3)} taking the form
\begin{enumerate}[{\rm (I1)}]
\item[{\rm (I3)}]
$a_n^{-1}\max\limits_{0\le j\le n}|Q|\circ f^j \to_{\mu^r} 0$ as $n\to\infty$.
\end{enumerate}
\end{thm}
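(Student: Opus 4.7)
The plan is to mirror the proof of Theorem~\ref{thm:induceflow} step by step, since the discrete suspension is formally parallel to the flow construction (and slightly simpler, with no excursions between integer times). First I would establish the analog of Proposition~\ref{prop:Z07}: with $W_n^X(t) = a_n^{-1} v_{[nt]}^X$, the identity $W_n^X(t)\circ F - W_n^X(t) = a_n^{-1}(v^X\circ F^{[nt]} - v^X)$ together with~(I2) immediately yields $\sup_{t\in [0,T]} |W_n^X(t)\circ F - W_n^X(t)| \to_\mu 0$. Next, introduce the integer-valued lap number
\[
N_n(x,\ell) = \max\Bigl\{k\ge 0 : \sum_{i=0}^{k-1} r(F^i x) \le \ell + n\Bigr\}
\]
on $X^r$ and set $\psi_n(t) = N_{[nt]}/n$, $\bar\psi(t) = t/\bar r$. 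Birkhoff's ergodic theorem applied to $r$ on $(X,\mu)$ gives $N_n/n \to 1/\bar r$ $\mu^r$-a.e., whence $\sup_{t\in[0,T]}|\psi_n(t)-\bar\psi(t)|\to 0$ $\mu^r$-a.e.

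For the analog of Proposition~\ref{prop:lap}, set $\widehat{W_n^X}(x,\ell) = W_n^X(x)$ on $X^r$. By~(I1), ergodicity of $\mu$, the preceding Proposition~\ref{prop:Z07}-analog, and~\cite[Theorem~1]{Zweimuller07}, weak convergence $W_n^X\to_\mu \sigma W$ passes to $W_n^X\to_{\hmu}\sigma W$, where $d\hmu/d\mu = r/\bar r$. Since $\mu^r(\widehat{W_n^X}\in E) = \hmu(W_n^X\in E)$ for Borel $E\subset D[0,\infty)$, this gives $\widehat{W_n^X}\to_{\mu^r}\sigma W$. Combined with the a.e.\ convergence of $\psi_n$, one obtains $(\widehat{W_n^X},\psi_n)\to_{\mu^r}(\sigma W,\bar\psi)$, and the continuous mapping theorem produces $\widehat{W_n^X}\circ\psi_n \to_{\mu^r} \bar r^{-1/2}\sigma W$.

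Finally, telescoping $v_n=\sum_{j=0}^{n-1}v\circ f^j$ column-by-column on the tower gives
\[
v_n(x,\ell) = v^X_{N_n(x,\ell)}(x) + Q(f^n(x,\ell)) - Q(x,\ell) + R_n(x,\ell),
\]
where the remainder $R_n = v - v\circ f^n$ arises from the off-by-one inherent in the paper's convention $Q(x,\ell)=\sum_{k=0}^\ell v(x,k)$. Setting $F_n(t) = a_n^{-1}(Q\circ f^{[nt]}-Q)$ we obtain the analog of~\eqref{eq:vt}, namely $W_n = \widehat{W_n^X}\circ\psi_n + F_n + a_n^{-1}R_n$. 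Since $v\in L^1(\mu^r)$ and $|Q(x,\ell)|\le \sum_{k=0}^{r(x)-1}|v(x,k)|$ shows $Q\in L^1(\mu^r)$, Markov's inequality yields $a_n^{-1}R_n\to_{\mu^r}0$ and $F_n(1)\to_{\mu^r}0$ even without invoking~(I3); combined with the Proposition~\ref{prop:lap}-analog evaluated at $t=1$, this gives the CLT with variance $\bar r^{-1}\sigma^2$. Under~(I3) we moreover have $\sup_{t\in[0,T]}|F_n(t)|\to_{\mu^r}0$, yielding the full WIP. The only step requiring genuine care is the column-by-column bookkeeping in the decomposition, i.e.\ tracking the initial fibre coordinate $\ell$ and the partial column visited at step $n$; this is routine but fussy, and everything else is a direct transcription of the flow argument.
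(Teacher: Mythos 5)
Your proof follows exactly the paper's route: the paper's own proof of Theorem~\ref{thm:inducemap} is literally the one sentence ``identical to Theorem~\ref{thm:induceflow} with the obvious modifications'', and you have carried out those modifications correctly, including spotting the off-by-one remainder $R_n=v-v\circ f^n$ forced by the stated convention $Q(x,\ell)=\sum_{k=0}^{\ell}v(x,k)$. Two small points need repair. First, your justification that $F_n(1)\to_{\mu^r}0$ via ``$Q\in L^1(\mu^r)$'' does not work: the bound $|Q(x,\ell)|\le\sum_{k=0}^{r(x)-1}|v(x,k)|$ is by a function constant on fibres, so its $\mu^r$-integral carries an extra factor of $r$ and need not be finite when $v\in L^1(\mu^r)$ (this is precisely the subtlety handled by splitting over $\{r>K\}$ in the proof of Proposition~\ref{prop:I3}). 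No integrability is needed, however: $Q$ is a finite sum, hence finite everywhere, and $f^n$ preserves $\mu^r$, so $\mu^r(|Q\circ f^{n}|>\eps a_n)=\mu^r(|Q|>\eps a_n)\to0$; the same remark disposes of $a_n^{-1}R_n$ at $t=1$. Second, for the WIP you control $\sup_{t\le T}|F_n(t)|$ under (I3) but say nothing about $\sup_{t\le T}a_n^{-1}|v-v\circ f^{[nt]}|$, which is also needed since your remainder is $t$-dependent; this too follows from (I3), because $|v\circ f^j|\le |Q\circ f^j|+|Q\circ f^{j-1}|$ for $j\ge1$, so $a_n^{-1}\max_{0\le j\le n}|v|\circ f^j\to_{\mu^r}0$. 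With these two repairs the argument is complete and coincides with the paper's intended proof.
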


\begin{proof}
The proof is identical to that for Theorem~\ref{thm:induceflow}
with the obvious modifications (sums in place of integrals, etc).
\end{proof}

\subsection{Verification of hypotheses}
\label{sec:I}

In the situation of maps in Subsection~\ref{sec:inducemap},
condition~(I3) simplifies considerably, as we now explain.
Define $\tv^X:X\to\R$ by 
\[
 \tv^X(x)=\max_{0\le \ell\le r(x)}\left|\sum_{k=0}^{\ell-1} v(x,k)\right|.
\]

\begin{prop} \label{prop:I3}
Suppose that
\begin{equation} \label{eq:I3}
a_n^{-1}\max_{0\le j\le n}\tv^X\circ F^j\to_\mu 0
\quad\text{as $n\to\infty$.}
\end{equation}
Then {\em (I2)} and {\em (I3)} hold.
\end{prop}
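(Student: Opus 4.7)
The plan is to observe that both (I2) and (I3) reduce to the hypothesis \eqref{eq:I3} via pointwise bounds on the base coordinate, together with a routine transfer from $\mu$-probability to $\mu^r$-probability.

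First, I would dispense with (I2). Taking $\ell=r(x)$ in the definition of $\tv^X$ gives $\tv^X(x)\ge|\sum_{k=0}^{r(x)-1}v(x,k)|=|v^X(x)|$, so $|v^X|\le \tv^X$ pointwise on $X$. Hence
\[
a_n^{-1}\max_{0\le j\le n}|v^X|\circ F^j\le a_n^{-1}\max_{0\le j\le n}\tv^X\circ F^j\to_\mu 0,
\]
which is (I2).

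Next, I would establish the key pointwise bound for (I3). Let $\pi:X^r\to X$, $\pi(x,\ell)=x$. For any $(x,\ell)\in X^r$, the iterate $f^j(x,\ell)$ has the form $(F^i x, k)$ for some $i\le j$ and $0\le k<r(F^i x)$, so $Q(f^j(x,\ell))=\sum_{m=0}^{k}v(F^i x,m)$, and the definition of $\tv^X$ (applied at $\ell=k+1\le r(F^ix)$) gives $|Q|\circ f^j(x,\ell)\le \tv^X(F^ix)$. Taking the maximum over $0\le j\le n$ and using $i\le j\le n$,
\[
\max_{0\le j\le n}|Q|\circ f^j(x,\ell)\le \max_{0\le i\le n}\tv^X(F^ix)
= \bigl(\max_{0\le i\le n}\tv^X\circ F^i\bigr)\circ\pi.
\]

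It remains to transfer \eqref{eq:I3} from $(X,\mu)$ to $(X^r,\mu^r)$. Set $M_n=\max_{0\le i\le n}\tv^X\circ F^i$ and fix $\eps>0$. Because the event $\{M_n>\eps a_n\}$ depends only on $x$,
\[
\mu^r\bigl\{a_n^{-1}\max_{0\le j\le n}|Q|\circ f^j>\eps\bigr\}
\le \mu^r\{M_n\circ\pi>\eps a_n\}
= \bar r^{-1}\int_X r\,1_{\{M_n>\eps a_n\}}\,d\mu.
\]
By hypothesis $\mu\{M_n>\eps a_n\}\to 0$, and since $r\in L^1(\mu)$, absolute continuity of the integral gives $\int_X r\,1_{\{M_n>\eps a_n\}}\,d\mu\to0$. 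Hence (I3) holds. The only mild subtlety is this last transfer step, but it is completely standard; there is no real obstacle.
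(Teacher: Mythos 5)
Your proof is correct and follows essentially the same route as the paper's: the pointwise bound $|Q|\circ f^j\le \tv^X\circ F^i$ for some $i\le j$, followed by the transfer of convergence in $\mu$-probability to $\mu^r$-probability using $r\in L^1(X)$ (the paper truncates $r$ at a level $K$, which is just an explicit form of your appeal to absolute continuity of the integral). No gaps.
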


\begin{proof}
Clearly,~\eqref{eq:I3} implies~(I2), so we focus on (I3).
Note that
$|Q(x,\ell)|\le \tv^X(x)$ for all $(x,\ell)\in X^r$.
Also, for any $(x,\ell)\in X^r$, $n\ge1$, we can write $f^n(x,\ell)=(x',\ell')\in X^r$ where $x'=F^{n'}x$ for some $n'\le n$.
Hence
\begin{equation} \label{eq:Q}
a_n^{-1}\max_{j\le n}|Q\circ f^j|\le 
a_n^{-1}\max_{j\le n}\tv^X\circ F^j.
\end{equation} 
Let $z_n=a_n^{-1}\max\limits_{0\le j\le n}\tv^X\circ F^j$ and
define $\hz_n(x,u)=z_n(x)$.
Then for $\eps>0$, $K>0$:
\begin{align*}
	\mu^r\left(a_n^{-1}\max_{0\le j\le n}|Q\circ f^j|>\eps\right) & \le 
	\mu^r( \hz_n>\eps)
	= \bar r^{-1}\int_X r1_{\{z_n>\eps\}}\,d\mu
	\\ & \le  \bar r^{-1}K\mu(z_n>\eps)
	+\bar r^{-1}\mu(r>K).
\end{align*}
Since $r\in L^1(X)$ and $z_n\to_\mu0$ by~\eqref{eq:I3}, condition~(I3) then follows.
\end{proof}

\begin{rmk} 
Similarly, for flows we can define $\tv^X:X\to\R$ by
\[
\tv^X(x)=\max_{0\le u\le r(x)}\left|\int_0^u v(x,s)\,ds\right|.
\]
Suppose that
\[
a_n^{-1}\max_{0\le j\le n}\tv^X\circ f^j\to_\mu 0 
\quad\text{as $n\to\infty$}.
\]
Then certainly (I2) holds. 
Condition~(I3) also holds provided
$\inf(r) >0$. This extra condition is required for the step~\eqref{eq:Q} in the proof of Proposition~\ref{prop:I3}.
It ensures that
for any $(x,u)\in X^r$, $t>0$, we can write $g_t(x,u)=(x',u')\in X^r$ where $x'=f^nx$ for some $n\le 1+t/\inf(r)$.
\end{rmk}

Suppose that $a_n=n^{1/2}$.
If $v^X$ (resp.\ $\tv^X$) is in $L^2(X)$, then condition~(I2) (resp.~\eqref{eq:I3}) is automatically satisfied. The next result is useful for verifying (I2) and~\eqref{eq:I3} when $a_n=(n\log n)^{1/2}$.

\begin{prop} \label{prop:max}
Let $V:X\to\R$ be a measurable function satisfying
$\mu(|V|>n)=O(n^{-2})$.
Then
$(n\log n)^{-1/2}\max\limits_{0\le j\le n} |V|\circ F^j \to_\mu 0$ as $n\to\infty$.

\end{prop}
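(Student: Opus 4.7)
The plan is to use the classical first-moment (union bound) argument, exploiting that $\mu$ is $F$-invariant so that each $|V|\circ F^j$ has the same distribution as $|V|$. Fix $\eps>0$ and write $a_n=(n\log n)^{1/2}$. Then
\[
\mu\Bigl(\max_{0\le j\le n}|V|\circ F^j>\eps a_n\Bigr)\le \sum_{j=0}^n \mu(|V|\circ F^j>\eps a_n)=(n+1)\mu(|V|>\eps a_n),
\]
by subadditivity and invariance of $\mu$ under $F$.

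Next, I apply the tail hypothesis $\mu(|V|>t)=O(t^{-2})$ with $t=\eps a_n=\eps(n\log n)^{1/2}$, which gives
\[
\mu(|V|>\eps a_n)=O\bigl(\eps^{-2}(n\log n)^{-1}\bigr).
\]
Substituting into the previous display yields
\[
\mu\Bigl(\max_{0\le j\le n}|V|\circ F^j>\eps a_n\Bigr)=O\bigl(\eps^{-2}(\log n)^{-1}\bigr)\to 0\quad\text{as $n\to\infty$.}
\]
Since $\eps>0$ is arbitrary, this is the required convergence in probability.

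There is no real obstacle here: the only subtlety is to remember that the hypothesis $\mu(|V|>t)=O(t^{-2})$ is exactly the integrability threshold at which $V\in L^p$ for every $p<2$ but not in $L^2$, so a direct Borel--Cantelli / $L^2$-maximal argument is unavailable. The union bound suffices precisely because the factor $n$ from the union is defeated by the extra logarithmic factor built into the normalisation $a_n^2=n\log n$, leaving a residual $1/\log n$.
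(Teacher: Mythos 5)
Your proof is correct, and it is genuinely simpler than the one in the paper. The key point you rely on — that $\mu$ is $F$-invariant, so each $|V|\circ F^j$ is equidistributed with $|V|$ — is valid in the paper's setting ($(F,X,\mu)$ is a measure-preserving transformation), and the union bound at the level $\eps a_n=\eps(n\log n)^{1/2}$ gives
$\mu\bigl(\max_{0\le j\le n}|V|\circ F^j>\eps a_n\bigr)\le (n+1)\mu(|V|>\eps a_n)=O(\eps^{-2}(\log n)^{-1})$,
which indeed tends to $0$. The paper proceeds differently: it performs the union bound only at the lower threshold $q_n=(n\log\log n)^{1/2}$ (obtaining an exceptional set $E_n$ with $\mu(E_n)\ll(\log\log n)^{-1}$), truncates $V$ at level $q_n$, and then controls the maximum of the truncated variable by a fourth-moment/Chebyshev estimate. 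That two-step structure is not needed for the statement of the proposition itself — your one-line bound suffices and even yields the better rate $O((\log n)^{-1})$ versus the paper's $O((\log\log n)^{-1})$ — but the truncation at level $q_n$ and the set $E_n$ from the paper's proof are reused later (notably in the proof of Theorem 3.3), which is presumably why the authors set the argument up this way. The only cosmetic point worth tightening in your write-up is that the hypothesis is stated for integer thresholds, $\mu(|V|>n)=O(n^{-2})$; passing to real $t$ via $\mu(|V|>t)\le\mu(|V|>[t])=O(t^{-2})$ is immediate but could be mentioned.
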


\begin{proof}
Let $a_n=(n\log n)^{1/2}$, $q_n=(n\log\log n)^{1/2}$, and 
define 
\begin{equation} \label{eq:En}
E_n= \{x\in X: |V(F^jx)|>q_n\text{ for some }0\le j\le n\}.
\end{equation} 
Then
\[
\mu(E_n) \le
\sum_{j=0}^n\mu(|V\circ F^j|>q_n)
=(n+1)\mu(|V|>q_n)\ll nq_n^{-2}=(\log\log n)^{-1}.
\]
Setting $V_n= V1_{\{|V|\le q_n\}}$, we have $V_n\circ F^j=V\circ F^j$ for all $j\le n$ on $E_n^c$ and hence it suffices to show that
$a_n^{-1}\max\limits_{0\le j\le n}|V_n|\circ F^j\to_\mu0$.
Now,
\[
\int_X V_n^4\,d\mu\le \sum_{k\le q_n+1}k^4\mu(k-1< |V|\le k)
\ll 
\sum_{k\le q_n+1}k^3\mu(|V|>k)
\ll \sum_{k\le q_n+1}k
\approx q_n^2,
\]
so
\[
\Big|\max_{0\le j\le n} |V_n|\circ F^j\Big|_4^4 \le n|V_n|_4^4 
\ll nq_n^2=n^2\log\log n.
\]
Hence
\[
a_n^{-4}\Big|\max_{0\le j\le n} |V_n|\circ F^j\Big|_4^4 \ll \log\log n/(\log n)^2,
\]
which implies the required convergence.
\end{proof}

\section{Nonstandard WIP for Gibbs-Markov maps}
\label{sec:GM}

In this section, we prove a nonstandard WIP for Gibbs-Markov maps.
We would expect that this result is well-known to experts, but we could not find a convenient reference.
For the nonstandard CLT, see~\cite{AD01b, Gouezel10b}.
For our purposes in this paper, it suffices to consider piecewise constant observables, and we do so, but this assumption is easily relaxed. Similarly, we only consider the simplest tail conditions (\eqref{eq:Vtail} below) since this also suffices for our purposes.

Let $(Z,\mu)$ be a probability space with an at most countable measurable partition $\{Z_k:\,k\ge1\}$ and let
$F:Z\to Z$ be an ergodic measure-preserving map.
Define the separation time $s(z,z')$ to be the least integer $n\ge0$ such that $F^n z$ and $F^nz'$ lie in distinct partition elements.
We assume that $s(z,z')=\infty$ if and only if $z=z'$; then $d_\theta(z,z')=\theta^{s(z,z')}$ is a metric for $\theta\in(0,1)$.

We say that
$F:Z\to Z$ is a \emph{Gibbs-Markov map} if:
\begin{enumerate}[i..]

\parskip = -1pt
\item[(i)] $F:Z_k\to Z$ is a measure-theoretic bijection onto a union of partition elements for each $k\ge1$;
\item[(ii)] $\inf_k\mu(FZ_k){>0}$;
\item[(iii)] there exists $\theta\in(0,1)$ such that
$\log\xi$ is $d_\theta$-Lipschitz, where
$\xi = d\mu/d\mu\circ F$.
\end{enumerate}
(For standard facts about Gibbs-Markov maps, we refer to~\cite{AD01,ADU93}.)

Let $V:Z\to\R$ be an observable that is in the nonstandard domain of the central limit theorem. In particular, we assume that there exists $\sigma>0$ such that
\begin{equation} \label{eq:Vtail}
\mu(|V|>n)\sim \sigma^2  n^{-2} \quad \text{as $n\to\infty$}.
\end{equation}
(So $V\in L^p(Z)$ for all $p<2$, but $V\not\in L^2(Z)$.)
We suppose that $\int_Z V\,d\mu=0$.

As mentioned above, we suppose for simplicity that $V$ is \emph{piecewise constant}
(constant on partition elements $Z_k$).
In this section, we prove:

\begin{thm} \label{thm:GM}
Let $F:Z\to Z$ be a mixing Gibbs-Markov map. 
Suppose that $V$ is piecewise constant and satisfies~\eqref{eq:Vtail}.
Then $V$ satisfies the nonstandard WIP with variance $\sigma^2$.
\end{thm}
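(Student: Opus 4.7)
The plan is a truncation-based martingale--coboundary decomposition followed by a martingale functional CLT for triangular arrays. Let $a_n=(n\log n)^{1/2}$, choose $\eps_n\downarrow0$ with $\eps_n^2\log n\to\infty$ (e.g.\ $\eps_n=(\log\log n)^{-1}$), and set $b_n=\eps_n a_n$ and
\[
V^{(n)}=V\,1_{\{|V|\le b_n\}}-{\textstyle\int_Z}V\,1_{\{|V|\le b_n\}}\,d\mu.
\]
From~\eqref{eq:Vtail} and a union bound, $\mu\bigl(\max_{0\le j\le n}|V|\circ F^j>b_n\bigr)\le n\,\mu(|V|>b_n)\ll(\eps_n^2\log n)^{-1}\to0$, so on a set of measure tending to $1$ the partial sums of $V$ and $V^{(n)}$ agree up to a deterministic centering of size $O(n/b_n)$, which after division by $a_n$ contributes $O((\eps_n\log n)^{-1})\to 0$ uniformly in $t$. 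Hence the nonstandard WIP for $V$ reduces to the nonstandard WIP for $V^{(n)}$.

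Since $V$ is piecewise constant on the Markov partition, so is $V^{(n)}$, and $\|V^{(n)}\|_\theta\ll b_n$ in the space of bounded $d_\theta$-Lipschitz functions. By the spectral gap of the transfer operator $P$ of $F$ on this space, $\chi^{(n)}=\sum_{k=1}^\infty P^kV^{(n)}$ converges with $\|\chi^{(n)}\|_\infty\ll b_n$, giving
\[
V^{(n)}=M^{(n)}+\chi^{(n)}-\chi^{(n)}\circ F, \qquad PM^{(n)}=0.
\]
The coboundary telescopes to $\chi^{(n)}\circ F^{[nt]}-\chi^{(n)}$, contributing $O(b_n/a_n)=O(\eps_n)\to 0$ in sup-norm, so it suffices to prove the WIP for $a_n^{-1}\sum_{j=0}^{[nt]-1}M^{(n)}\circ F^j$. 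A direct calculation from~\eqref{eq:Vtail} yields $\int_Z(V^{(n)})^2\,d\mu=\int_0^{b_n^2}\mu(V^2>s)\,ds\sim 2\sigma^2\log b_n\sim\sigma^2\log n$, and orthogonality in the martingale decomposition transfers this to $\int_Z(M^{(n)})^2\,d\mu\sim\sigma^2\log n$, with cross-terms controlled by $\|\chi^{(n)}\|_2\ll(\log n)^{1/2}$.

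Finally, apply a Brown--McLeish-type martingale functional CLT to the triangular array $\{a_n^{-1}M^{(n)}\circ F^j\}_{0\le j\le n}$. The Lindeberg condition is trivial, since $|M^{(n)}|\ll b_n=\eps_n a_n$. The main obstacle, and the delicate technical step, is the convergence of conditional variances,
\[
a_n^{-2}\sum_{j=0}^{[nt]-1}E\bigl[(M^{(n)}\circ F^j)^2\,\big|\,F^{-(j+1)}\mathcal{B}\bigr]\to_\mu t\sigma^2,
\]
for each $t\in[0,1]$, which requires control uniform in $n$ even though $M^{(n)}$ varies with $n$. I would handle this by estimating $\|P^k((M^{(n)})^2)-\int_Z(M^{(n)})^2\,d\mu\|_\infty$ using the spectral gap: the $\theta$-seminorm of $(M^{(n)})^2$ grows only polynomially in $b_n$, while the spectral gap delivers geometric decay in $k$, so summing over $j$ averages out to the required deterministic limit. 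Tightness follows from the martingale maximal inequality combined with the uniform jump bound $|M^{(n)}|\ll\eps_n a_n$.
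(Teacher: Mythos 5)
Your outline follows the same route as the paper (truncate, martingale--coboundary decomposition via the transfer operator, martingale functional CLT for a triangular array, de-truncate), but there is a concrete gap at the variance-transfer step. You bound $\|\chi^{(n)}\|_\theta$ only by $\|V^{(n)}\|_\theta\ll b_n$, and then assert $\|\chi^{(n)}\|_2\ll(\log n)^{1/2}$ without justification. Nothing in your argument delivers that bound, and even if it held it would not suffice: from $V^{(n)}=M^{(n)}+\chi^{(n)}-\chi^{(n)}\circ F$ one only gets $\bigl||M^{(n)}|_2-|V^{(n)}|_2\bigr|\le 2|\chi^{(n)}|_2$, so an error of order $(\log n)^{1/2}$ perturbs $|M^{(n)}|_2^2$ by $O(\log n)$ --- the same order as the main term $\sigma^2\log n$ --- and the asymptotic $\int_Z(M^{(n)})^2\,d\mu\sim\sigma^2\log n$ does not follow. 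The missing ingredient is the smoothing property of the Gibbs--Markov transfer operator on piecewise constant observables: since $L$ has the distortion bounds $\xi|_{Z_k}\ll\mu(Z_k)$ and $|\xi(z)-\xi(z')|\ll\mu(Z_k)d_\theta(z,z')$, one gets $\|LV^{(n)}\|_\theta\ll|V^{(n)}|_1\le 2|V|_1$ \emph{uniformly in $n$}, hence $\sup_n\|\chi^{(n)}\|_\theta<\infty$. With that $O(1)$ bound the variance transfer is immediate, and it is also what rescues your conditional-variance step: with only $\|M^{(n)}\|_\infty\ll b_n$ you would get $n|M^{(n)}|_4^4\ll nb_n^4=\eps_n^4n^3\log^2 n$, which is not $o(a_n^4)=o(n^2\log^2 n)$, so the diagonal term in the second-moment computation for $\sum_j(M^{(n)})^2\circ F^j$ would not close; with the uniform bound on $\chi^{(n)}$ one has $|M^{(n)}|_4^4\ll|V^{(n)}|_4^4+O(1)\ll b_n^2$, which does suffice.

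Two smaller points. First, for a noninvertible $F$ the condition $PM^{(n)}=0$ makes $\{M^{(n)}\circ F^j\}$ a \emph{reverse} martingale difference sequence; your conditioning on $F^{-(j+1)}\mathcal{B}$ is consistent with this, but to extract the \emph{functional} limit you still need either a reverse-martingale WIP or the paper's device of passing to the natural extension, reversing time, and applying the map $u\mapsto u(1)-u(1-\cdot)$, which fixes the law of Brownian motion; this should be said explicitly. Second, your choice of truncation level $b_n=\eps_na_n$ is workable but makes the coboundary negligibility depend on $\eps_n\to0$; the paper's smaller level $q_n=(n\log\log n)^{1/2}$, combined with $\sup_n\|\chi_n\|_\theta<\infty$, makes both the coboundary and the Lindeberg-type condition $a_n^{-1}|M^{(n)}|_\infty\to0$ immediate. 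The rest of your plan (union bound for de-truncation, spectral-gap decay for $(M^{(n)})^2$, summation over pairs) matches the paper's Lemma on the quadratic variation and goes through once the uniform bound on $\chi^{(n)}$ is in place.
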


\begin{rmk} \label{rmk:GM}
If $V$ is piecewise constant (say) and lies in $L^2$, then it is well-known
that $V$ satisfies a standard (possibly degenerate) WIP.
(Again, finding a good reference seems hard,
but (for example) this is a very special case of~\cite[Theorem~2.1]{GM16} with $G=1$ and $d=1$. Conditions~(i)--(iii) of~\cite[Theorem~2.1]{GM16} reduce to integrability of the observable since $V$ is piecewise constant and $G=1$.)
\end{rmk} 

For $u:Z\to\R$, define
\[
\|u\|_\theta=|u|_\infty+|u|_\theta, \qquad |u|_\theta
=\sup_{z\neq z',\,s(z,z')\ge 1} \frac{|u(z)-u(z')|}{d_\theta(z,z')}.
\]

Let $L:L^1(Z)\to L^1(Z)$ be the transfer operator corresponding to $(F,Z,\mu)$, so
$\int_Z Lu\,v\,d\mu=\int_Z u\,(v\circ F)\,d\mu$ for
$u\in L^1(Z)$, $v\in L^\infty(Z)$.
Since $F$ is a mixing Gibbs-Markov map, there exist constants $\gamma\in(0,1)$, $C_0>0$ such that
\begin{equation} \label{eq:L}
\|L^ju-{\textstyle\int_Z} u\,d\mu\|_\theta\le C_0\gamma^j\|u\|_\theta
\quad\text{for all $u:Z\to\R$ continuous,
$j\ge 0$.}
\end{equation}
Let $q_n=(n\log\log n)^{1/2}$, and
define $V_n=V1_{\{|V|\le q_n\}}-\int_Z V 1_{\{|V|\le q_n\}}\,d\mu$.
Write
\[
V_n=m_n+\chi_n\circ F-\chi_n
\quad\text{where} \quad
\chi_n=\sum_{j=1}^\infty L^jV_n.
\]

\begin{prop} \label{prop:chin}
Suppose that $V\in L^1(Z)$ is piecewise constant. Then
$\sup_{n\ge1}\|\chi_n\|_\theta<\infty$.
\end{prop}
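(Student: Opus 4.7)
The plan is to exploit that $V$ being piecewise constant makes every truncation $V_n$ piecewise constant as well, and to combine a dyadic decomposition of $V_n$ adapted to the tail~\eqref{eq:Vtail} with the regularising effect of $L$ on piecewise constant observables. Each dyadic block of $V_n$ should contribute $\ll 2^{-k}$ to $\|\chi_n\|_\theta$, making the total absolutely summable uniformly in $n$.

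First I would decompose. Set $K_n=\lceil \log_2 q_n\rceil$ and, for $1\le k\le K_n$, let $A_{k,n}=\{2^{k-1}<|V|\le \min(2^k,q_n)\}$, and $A_{0,n}=\{|V|\le 1\}$. Define $V^{[k],n}=V\mathbf{1}_{A_{k,n}}$ and $\tilde V^{[k],n}=V^{[k],n}-\int_Z V^{[k],n}\,d\mu$. Since $V$ is piecewise constant, each $A_{k,n}$ is a union of partition elements, so every $\tilde V^{[k],n}$ is mean-zero and piecewise constant, and $V_n=\sum_{k=0}^{K_n}\tilde V^{[k],n}$. The tail~\eqref{eq:Vtail} gives $\mu(A_{k,n})\le \mu(|V|>2^{k-1})\ll 2^{-2k}$, hence
\[
|\tilde V^{[k],n}|_1 \;\le\; 2|V^{[k],n}|_1 \;\le\; 2\cdot 2^k\mu(A_{k,n}) \;\ll\; 2^{-k}.
\]

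The key step is an $L^1\to\|\cdot\|_\theta$ smoothing estimate: for every piecewise constant $u:Z\to\R$,
\[
\|Lu\|_\theta \;\ll\; |u|_1.
\]
For $|Lu|_\infty$, expand $Lu(z)=\sum_a \xi(z_a)u(z_a)$ and use Gibbs-Markov distortion together with $\inf_k\mu(FZ_k)>0$ to bound $\xi(z_a)\ll \mu(Z_a)$. For $|Lu|_\theta$, when $z,z'$ lie in a common partition element the branch preimages $z_a,z_a'$ satisfy $s(z_a,z_a')=s(z,z')+1\ge 2$, so piecewise constancy forces $u(z_a)=u(z_a')$; the difference collapses to $\sum_a u_a(\xi(z_a)-\xi(z_a'))$, which by the $d_\theta$-Lipschitz property of $\log\xi$ (condition~(iii)) is $\ll |u|_1\,d_\theta(z,z')$. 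Applied to $\tilde V^{[k],n}$ this gives $\|L\tilde V^{[k],n}\|_\theta\ll 2^{-k}$.

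To finish, since $L\tilde V^{[k],n}$ is $d_\theta$-Lipschitz with mean zero, the spectral gap~\eqref{eq:L} applied to $L^{j-1}(L\tilde V^{[k],n})$ yields $\|L^j\tilde V^{[k],n}\|_\theta\ll \gamma^{j-1}2^{-k}$ for all $j\ge 1$. Summing in $j$ and then in $k$,
\[
\|\chi_n\|_\theta \;\le\; \sum_{k=0}^{K_n}\sum_{j=1}^\infty \|L^j\tilde V^{[k],n}\|_\theta \;\ll\; \sum_{k=0}^{K_n}2^{-k} \;\ll\; 1,
\]
uniformly in $n$. The step I expect to require the most care is the Lipschitz half of the smoothing estimate: piecewise constancy of $V$ is precisely what forces $u(z_a)=u(z_a')$ and turns the Hölder bound into a pure distortion bound scaling with $|u|_1$ rather than $|u|_\infty$. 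Without that, the dyadic contributions would grow like $2^{+k}$ and the argument would collapse; with it, the $n^{-2}$ tail in~\eqref{eq:Vtail} makes the geometric sum swallow the growth of $K_n$.
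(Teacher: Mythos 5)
Your proof is correct, and its core is exactly the paper's argument: the $L^1\to\|\cdot\|_\theta$ smoothing estimate for $L$ acting on piecewise constant observables (with the Lipschitz half reducing to a distortion bound precisely because $u(z_a)=u(z_a')$ for matched preimages), followed by the spectral gap~\eqref{eq:L} to sum the series defining $\chi_n$. The dyadic decomposition is an unnecessary detour: since your smoothing estimate is linear and scales with $|u|_1$, you can apply it directly to $V_n$ and use $|V_n|_1\le 2|V|_1$, which is what the paper does; this also avoids invoking the tail condition~\eqref{eq:Vtail}, which is not among the hypotheses of the proposition (only $V\in L^1$ and piecewise constancy are assumed).
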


\begin{proof}
We recall the pointwise formula $(LV_n)(z)=\sum_{k\ge1}\xi(z_k)V_n(z_k)$,
	where the sum is over those $k$ for which there is a preimage of $z$ under $F$ lying in $Z_k$, in which case $z_k\in Z_k$ is the unique such preimage.
There is a constant $C_1>0$ such that
\[
0<\xi(z)\le C_1\mu(Z_k),\qquad
|\xi(z)-\xi(z')| \le C_1\mu(Z_k)d_\theta(z,z'),
\]
for all $z,z'\in Z_k$, $k\ge1$.
Hence, for $z\in Z$,
\[
	|(LV_n)(z)|\le C_1{\textstyle \sum_k} \mu(Z_k)|V_n(z_k)|
	=C_1{\textstyle \sum_k} \mu(Z_k)|V_n|_{Z_k}| 
=C_1|V_n|_1
\le 2C_1|V|_1.
\]

Next, let $z,z'\in Z$ lying in a common partition element. Since $F$ is Markov, we can match up preimages $z_k,\,z_k'\in Z_k$. It follows that
\[
|(LV_n)(z)-(LV_n)(z')|  \le C_1{\textstyle \sum_k} \mu(Z_k)|V_n(z_k)|d_\theta(z,z') \le 2 C_1|V|_1\, d_\theta(z,z').
\]
Hence $\|LV_n\|_\theta\le 4C_1|V|_1$.
By~\eqref{eq:L}, we conclude that $\|\chi_n\|_\theta\le 
C_0(1-\gamma)^{-1}\|LV_n\|_\theta
\le 4C_0C_1(1-\gamma)^{-1}|V|_1$.
\end{proof}

\begin{cor} \label{cor:chin}
The following hold uniformly in $n\ge1$ and $1\le p\le\infty$:
\[
	|m_n|_p=|V 1_{\{|V|\le q_n\}}|_p+O(1) \quad\text{and}\quad
|m_n|_\theta =O(1).
\]
In particular,
$|m_n|_2^2\sim \sigma^2\log n$, 
$|m_n|_4^4\ll q_n^2$ and  $\|m_n\|_\theta\ll q_n$ .
\end{cor}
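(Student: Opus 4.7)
The plan is to start from the martingale decomposition $m_n = V_n - \chi_n\circ F + \chi_n$ and exploit Proposition~\ref{prop:chin}, which already controls $\|\chi_n\|_\theta$ uniformly in~$n$. Every estimate then reduces to a direct computation on $V_n$ together with cheap $L^\infty$ and H\"older control of the $\chi_n$ terms.

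For the $L^p$ bound, the triangle inequality gives $|m_n|_p \le |V_n|_p + 2|\chi_n|_\infty \le |V_n|_p + O(1)$, since $|\chi_n|_\infty \le \|\chi_n\|_\theta = O(1)$. To pass from $|V_n|_p$ to $|V 1_{\{|V|\le q_n\}}|_p$, write $V_n = V 1_{\{|V|\le q_n\}} - c_n$ with $c_n = \int V 1_{\{|V|\le q_n\}}\,d\mu$; since $\int V\,d\mu = 0$ we have $|c_n| \le \int_{|V|>q_n}|V|\,d\mu$, and the tail hypothesis~\eqref{eq:Vtail} yields $|c_n| = O(q_n^{-1})$, which is absorbed into the $O(1)$ error. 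For the $\theta$-seminorm, the key observation is that $V$ (hence $1_{\{|V|\le q_n\}}$ and $V_n$) is piecewise constant, so $|V_n|_\theta = 0$; the identity $s(Fz,Fz') = s(z,z')-1$ when $s(z,z')\ge 1$ gives $|\chi_n\circ F|_\theta \ll \|\chi_n\|_\theta$, and combining yields $|m_n|_\theta = O(1)$.

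The three ``in particular'' statements follow from short tail computations. A layer-cake computation gives $|V 1_{\{|V|\le q_n\}}|_2^2 = \int_0^{q_n}2u\,\mu(|V|>u)\,du \sim 2\sigma^2 \log q_n \sim \sigma^2\log n$, where we use $\log q_n = \tfrac12\log n + O(\log\log\log n)$; squaring the $L^p$ estimate then absorbs the $O(1)$ error as a lower-order term. The $L^4$ bound is essentially the dyadic estimate $\sum_{k\le q_n+1}k^3\mu(|V|>k)\ll q_n^2$ carried out in the proof of Proposition~\ref{prop:max}. Finally, $|m_n|_\infty \le |V 1_{\{|V|\le q_n\}}|_\infty + O(1) \le q_n + O(1)$, so $\|m_n\|_\theta = |m_n|_\infty + |m_n|_\theta \ll q_n$.

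I do not foresee any serious obstacle; everything rests on Proposition~\ref{prop:chin} together with the tail computations already present in Proposition~\ref{prop:max}. The one subtlety worth flagging is that when one composes $\chi_n$ with $F$, points $z,z'$ with $s(z,z')=1$ can map into different partition elements (so $s(Fz,Fz')=0$), meaning $|\chi_n\circ F|_\theta$ is not bounded purely by $\theta^{-1}|\chi_n|_\theta$ but also requires an $|\chi_n|_\infty/\theta$ contribution. Both are $O(1)$ by Proposition~\ref{prop:chin}, so this causes no real difficulty.
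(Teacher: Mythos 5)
Your proof is correct and follows essentially the same route as the paper: reduce everything to the uniform bound on $\|\chi_n\|_\theta$ from Proposition~\ref{prop:chin} and then do direct layer-cake/tail computations for $V1_{\{|V|\le q_n\}}$. The one place you are actually more careful than the paper is the estimate of $|\chi_n\circ F|_\theta$, where the pairs with $s(z,z')=1$ force an extra $|\chi_n|_\infty$ contribution (the paper's bound $(1+\theta^{-1})|\chi_n|_\theta$ elides this), but since both terms are $O(1)$ the conclusion is unaffected.
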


\begin{proof}
Note that
\[
	\left||m_n|_p - |V 1_{\{|V|\le q_n\}}|_p\right| \le
	\left|\int_Z V 1_{\{|V|\le q_n\}}\,d\mu\right|+2|\chi_n|_p\le |V|_1+2|\chi_n|_\infty
\]
and
\[
|m_n|_\theta =|\chi_n\circ F-\chi_n|_\theta
\le (1+\theta^{-1})|\chi_n|_\theta,
\]
hence the first two estimates follow by Proposition~\ref{prop:chin}.

	Estimates for $m_n$ thereby reduce to estimates for $V 1_{\{|V|\le q_n\}}$.
For example, 
\[
	|V 1_{\{|V|\le q_n\}}|_2^2 = 2\int_0^{q_n}t\mu(|V|\ge t)\,dt
\sim 2\sigma^2\log q_n\sim \sigma^2 \log n.
\]
	The calculation for $|V 1_{\{|V|\le q_n\}}|_4^4$ is similar, and the estimate for $|V 1_{\{|V|\le q_n\}}|_\infty$ (and hence $\|m_n\|_\theta$) is immediate.
\end{proof}

\begin{cor} \label{cor:chin2}
There exist $\gamma\in(0,1)$ and $C>0$ such that
\[
\left|\int_Z  m_n^2\,(m_n^2\circ F^j)\,d\mu - \left(\int_Z m_n^2\,d\mu\right)^2 \right|
\le C\gamma^j q_n^2\log n
\quad\text{for all $n,j\ge1$.}
\]
\end{cor}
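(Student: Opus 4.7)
The plan is to use the duality $\int_Z u\,(v\circ F^j)\,d\mu=\int_Z L^ju\cdot v\,d\mu$ to rewrite the correlation on the left-hand side in terms of the transfer operator, and then feed the result into the spectral estimate~\eqref{eq:L}. Concretely, I would first observe
\[
\int_Z m_n^2\,(m_n^2\circ F^j)\,d\mu - \Big(\int_Z m_n^2\,d\mu\Big)^{\!2}
= \int_Z\!\Big(L^jm_n^2-{\textstyle\int_Z}m_n^2\,d\mu\Big)\,m_n^2\,d\mu,
\]
and bound the right-hand side by $\bigl\|L^jm_n^2-\int_Zm_n^2\,d\mu\bigr\|_\infty\,|m_n^2|_1$, using $|m_n^2|_1=|m_n|_2^2$.

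Next I would apply~\eqref{eq:L} with $u=m_n^2$ (which lies in the relevant Lipschitz class since $V$ is piecewise constant and $\chi_n$ is $d_\theta$-Lipschitz by Proposition~\ref{prop:chin}) to obtain
\[
\Big\|L^jm_n^2-\int_Zm_n^2\,d\mu\Big\|_\theta\le C_0\gamma^j\|m_n^2\|_\theta,
\]
and in particular the same estimate holds with $|\cdot|_\infty$ in place of $\|\cdot\|_\theta$ on the left, since $|\cdot|_\infty\le\|\cdot\|_\theta$.

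The third step is to control $\|m_n^2\|_\theta$ by $\|m_n\|_\theta^2$. This uses the submultiplicativity $\|uv\|_\theta\le\|u\|_\theta\|v\|_\theta$, which follows from the elementary inequalities $|uv|_\infty\le|u|_\infty|v|_\infty$ and $|uv|_\theta\le|u|_\infty|v|_\theta+|u|_\theta|v|_\infty$. Combined with the bound $\|m_n\|_\theta\ll q_n$ from Corollary~\ref{cor:chin}, this gives $\|m_n^2\|_\theta\ll q_n^2$.

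Putting the three steps together and using $|m_n|_2^2\sim\sigma^2\log n$ (also from Corollary~\ref{cor:chin}) yields
\[
\Big|\int_Zm_n^2(m_n^2\circ F^j)\,d\mu-\Big(\int_Zm_n^2\,d\mu\Big)^{\!2}\Big|
\ll\gamma^j\cdot q_n^2\cdot\log n,
\]
which is the claim. I do not anticipate a serious obstacle: every tool is already in hand. The only point worth verifying carefully is that $m_n$, and therefore $m_n^2$, falls into the class of functions for which~\eqref{eq:L} applies; this follows from the decomposition $m_n=V_n-(\chi_n\circ F-\chi_n)$, the piecewise constancy of $V_n$, and the uniform $d_\theta$-Lipschitz bound on $\chi_n$ given by Proposition~\ref{prop:chin}.
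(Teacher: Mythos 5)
Your proposal is correct and follows essentially the same route as the paper: duality to move $F^j$ onto a transfer-operator power, the spectral gap estimate~\eqref{eq:L} applied to $m_n^2$, submultiplicativity of $\|\cdot\|_\theta$, and the bounds $\|m_n\|_\theta\ll q_n$, $|m_n|_2^2\approx\log n$ from Corollary~\ref{cor:chin}. The only cosmetic difference is that the paper applies~\eqref{eq:L} to the centred function $m_n^2-\int_Z m_n^2\,d\mu$ (picking up a harmless factor of $2$), whereas you apply it to $m_n^2$ directly.
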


\begin{proof}
By~\eqref{eq:L},
\begin{align*}
&\left|  \int_Z   m_n^2\, (m_n^2\circ F^j) \,d\mu  - \left(\int_Z m_n^2\,d\mu\right)^2 \right|  
 =
	\left|\int_Z  \left( m_n^2-\int_Zm_n^2\,d\mu\right)\,(m_n^2\circ F^j)\,d\mu \right|
	\\ & = \left|\int_Z  L^j\left( m_n^2-\int_Z m_n^2\,d\mu\right)\,m_n^2\,d\mu \right|
	\le   \left|L^j\left( m_n^2-\int_Z m_n^2\,d\mu\right)\right|_\infty\,|m_n^2|_1
\\ & \le  2C_0\gamma^j \|m_n^2\|_\theta\,|m_n|_2^2
 \le  2C_0\gamma^j \|m_n\|_\theta^2\,|m_n|_2^2.
\end{align*}
By Corollary~\ref{cor:chin},
$\|m_n\|_\theta\ll q_n$ and $|m_n|_2^2\approx \log n$ and so the proof is complete.
\end{proof}

\begin{rmk} A more careful argument shows that $\|L(m_n^2)\|_\theta\ll q_n$ and hence the estimate in Corollary~\ref{cor:chin2} can be improved to $C\gamma^j q_n\log n$. However, this refinement is not required here.
\end{rmk}

Set $a_n=(n\log n)^{1/2}$.
\begin{lemma} \label{lem:mom}
$\big|\sum_{j=0}^{[nt]-1}m_n^2\circ F^j-a_n^2\sigma^2 t \big|_2^2=o(a_n^4)$
as $n\to\infty$ for all $t\ge0$.
\end{lemma}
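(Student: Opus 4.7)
\textbf{Proof proposal for Lemma~\ref{lem:mom}.} The plan is to split the $L^2$-error into a bias term and a variance term,
\[
\Big|{\textstyle\sum_{j=0}^{[nt]-1}}m_n^2\circ F^j-a_n^2\sigma^2 t\Big|_2^2
= \operatorname{Var}\!\Big({\textstyle\sum_{j=0}^{[nt]-1}}m_n^2\circ F^j\Big)
+ \Big([nt]\,|m_n|_2^2-a_n^2\sigma^2 t\Big)^2,
\]
and show that each piece is $o(a_n^4)=o(n^2(\log n)^2)$. The key inputs are already available: the asymptotic $|m_n|_2^2\sim\sigma^2\log n$ and the bounds $|m_n|_4^4\ll q_n^2$ from Corollary~\ref{cor:chin}, together with the decay-of-correlations estimate
$\bigl|\int m_n^2(m_n^2\circ F^k)\,d\mu-(|m_n|_2^2)^2\bigr|\ll \gamma^k q_n^2\log n$ from Corollary~\ref{cor:chin2}.

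For the bias, I would write
$[nt]\,|m_n|_2^2 - a_n^2\sigma^2 t = ([nt]-nt)|m_n|_2^2+nt\bigl(|m_n|_2^2-\sigma^2\log n\bigr)$;
the first term is $O(\log n)$ and the second is $nt\cdot o(\log n)=o(n\log n)=o(a_n^2)$ by the asymptotic in Corollary~\ref{cor:chin}. Squaring gives $o(a_n^4)$.

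For the variance, expand using $F$-invariance of $\mu$,
\[
\operatorname{Var}\!\Big({\textstyle\sum_{j=0}^{[nt]-1}}m_n^2\circ F^j\Big)
=[nt]\operatorname{Var}(m_n^2)+2{\textstyle\sum_{k=1}^{[nt]-1}}([nt]-k)\,C_k,
\quad C_k=\textstyle\int m_n^2(m_n^2\circ F^k)\,d\mu-(|m_n|_2^2)^2.
\]
The diagonal contribution is bounded by $[nt]|m_n|_4^4\ll n q_n^2=n^2\log\log n$. The off-diagonal contribution is bounded by $[nt]\sum_{k\ge1}|C_k|\ll n\cdot q_n^2\log n\cdot\sum_{k\ge1}\gamma^k\ll n^2\log n\log\log n$. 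Both are $o(n^2(\log n)^2)=o(a_n^4)$, completing the proof.

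There is no serious obstacle here: the result is essentially a second-moment computation, and the only nontrivial ingredient is the exponential decay of correlations for $m_n^2$ against itself, which has already been supplied (with the only slightly delicate point being that the bounds must carry the correct truncation level $q_n$, but this is exactly what Corollaries~\ref{cor:chin} and~\ref{cor:chin2} deliver, and $q_n^2=n\log\log n$ is just small enough relative to $a_n^4=n^2(\log n)^2$).
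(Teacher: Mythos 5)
Your proof is correct and uses the same ingredients and estimates as the paper's (the moment bounds of Corollary~\ref{cor:chin} and the correlation decay of Corollary~\ref{cor:chin2}, with the same diagonal/off-diagonal split and the same bounds $nq_n^2=n^2\log\log n$ and $nq_n^2\log n=n^2\log n\log\log n$, both $o(a_n^4)$). The only difference is organisational: you package the computation as bias-squared plus variance, whereas the paper expands $|S-a_n^2\sigma^2t|_2^2$ directly and verifies that the leading terms $a_n^4\sigma^4t^2$ cancel; your version is marginally cleaner but not a different argument.
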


\begin{proof}
By Corollary~\ref{cor:chin},
$\int_Z m_n^2\,d\mu \sim \sigma^2\log n$ so
\begin{equation} \label{eq:mom1}
\int_Z \sum_{j=0}^{[nt]-1}m_n^2\circ F^j\,d\mu
=[nt] \int_Z m_n^2\,d\mu \sim \sigma^2 t\, n\log n = a_n^2 \sigma^2 t.
\end{equation}
Next, again by Corollary~\ref{cor:chin},
$\int_Z m_n^4\,d\mu \ll q_n^2$ and so
\[
 \int_Z \sum_{j=0}^{[nt]-1} m_n^4\circ F^j\,d\mu \ll nq_n^2=n^2\log\log n=o(a_n^4).
\]
By Corollary~\ref{cor:chin2}, for $i<j$,
\begin{align*}
&	\int_Z (m_n^2\circ F^i)\, (m_n^2\circ F^j)\,d\mu
 =\int_Z m_n^2\,(m_n^2\circ F^{j-i})\,d\mu
\\ & =\left(\int_Z m_n^2\,d\mu\right)^2 + 
O\big(\gamma^{j-i}q_n^2\log n\big)
 \sim \sigma^4\log^2n+ O\big(\gamma^{j-i}q_n^2\log n\big).
\end{align*}
Hence
\begin{align} \nonumber
 & \int_Z\left(\sum_{j=0}^{[nt]-1} m_n^2\circ F^j\right)^2 \,d\mu  
 =
2\sum_{0\le i < j< nt} \int_Z (m_n^2\circ F^i)\,(m_n^2\circ F^j)\,d\mu
+\int_Z\sum_{j=0}^{[nt]-1}m_n^4\circ F^j\,d\mu
\\ & \sim \sigma^4(n^2t^2-nt)\log^2n + O\left( q_n^2\log n\sum_{0<r<nt}(nt-r)\gamma^r \right) + o(a_n^4) \nonumber
\\ & \sim \sigma^4  n^2t^2\log^2n + O\Big( nq_n^2\log n\Big)+o(a_n^4) \sim  a_n^4\sigma^4 t^2 . \label{eq:mom2}
\end{align}
Using~\eqref{eq:mom1} and~\eqref{eq:mom2},
\begin{align*}
&\left|\sum_{j=0}^{[nt]-1}  m_n^2 \circ F^j -a_n^2\sigma^2 t\right|_2^2
 = \int_Z\left(\sum_{j=0}^{[nt]-1}m_n^2\circ F^j-a_n^2\sigma^2 t\right)^2\,d\mu
\\ & = \int_Z \left(\sum_{j=0}^{[nt]-1}m_n^2\circ F^j\right)^2\,d\mu
-2a_n^2\sigma^2 t\int_Z\sum_{j=0}^{[nt]-1}m_n^2\circ F^j\,d\mu
+a_n^4\sigma^4t^2
\\ & = a_n^4\sigma^4 t^2 -2a_n^2\sigma^2 t \cdot a_n^2\sigma^2 t + a_n^4\sigma^4t^2
+o(a_n^4)=o(a_n^4)
\end{align*}

as required.
\end{proof}

Let $(\hF,\hZ,\hmu)$ 
denote the \emph{natural extension} (see e.g.~\cite[Section~1.3 G]{Petersen}) of $(F,Z,\mu)$ with measure-preserving semiconjugacy $\hpi:\hZ\to Z$.
Let $\hm_n=m_n\circ \hpi$.
By construction, $m_n\in\ker L$. 
It follows as in~\cite{Gordin69} that $\{\hm_n\circ \hF^{-j}:j=1,2,\dots\}$ is a martingale difference sequence. (A detailed explanation can be found for example in~\cite[Remark~3.12]{FMT03}.)
Hence, for $t\ge0$, we can define
the martingale difference array $\{X_{n,j}:1\le j\le [nt]\}$
where $X_{n,j}=a_n^{-1} \hm_n\circ \hF^{-j}$.
Define the processes
\(
\hM^{-}_n(t)=\sum_{j=1}^{[nt]}X_{n,j}
\)
on $(\hZ,\hmu)$.

\begin{lemma} \label{lem:McL}
	$\hM_n^-\to_{\hmu} \sigma W$ in $D[0,\infty)$ as $n\to\infty$.
\end{lemma}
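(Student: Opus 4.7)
The plan is to invoke a standard functional martingale central limit theorem (for example McLeish, or Helland) for the triangular array of differences $\{X_{n,j}\}$. It suffices to verify two hypotheses: the quadratic variance convergence $\sum_{j=1}^{[nt]} X_{n,j}^2 \to_{\hmu} \sigma^2 t$ for every $t\ge 0$, together with uniform asymptotic negligibility of the increments.

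For the variance condition, I would set $Y_j=\hm_n^2\circ\hF^{-j}=m_n^2\circ\hpi\circ\hF^{-j}$ and use $\hF$-invariance of $\hmu$, the semiconjugacy $\hpi\circ\hF=F\circ\hpi$, and $\hpi_*\hmu=\mu$ to translate moment computations on $\hZ$ to $(Z,\mu)$:
\[
\int_{\hZ}Y_j\,d\hmu=\int_Z m_n^2\,d\mu, \qquad \int_{\hZ}Y_iY_j\,d\hmu=\int_Z m_n^2\,(m_n^2\circ F^{|j-i|})\,d\mu.
\]
Expanding the square, this gives
\[
\Big|\sum_{j=1}^{[nt]}Y_j-a_n^2\sigma^2 t\Big|_{L^2(\hmu)}^2=\Big|\sum_{j=0}^{[nt]-1}m_n^2\circ F^j-a_n^2\sigma^2 t\Big|_{L^2(\mu)}^2=o(a_n^4)
\]
by Lemma~\ref{lem:mom}. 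Dividing by $a_n^2$ yields $\sum_{j=1}^{[nt]}X_{n,j}^2\to\sigma^2 t$ in $L^2(\hmu)$, in particular in $\hmu$-probability.

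For the negligibility condition, the definition $V_n=V 1_{\{|V|\le q_n\}}-\int V 1_{\{|V|\le q_n\}}\,d\mu$ gives $|V_n|_\infty\le 2q_n$, while Proposition~\ref{prop:chin} yields $|\chi_n|_\infty\le\|\chi_n\|_\theta=O(1)$. Therefore $|m_n|_\infty\ll q_n$ and
\[
\max_{1\le j\le[nT]}|X_{n,j}|\le a_n^{-1}|m_n|_\infty\ll\bigl(\tfrac{\log\log n}{\log n}\bigr)^{1/2}\to 0 \quad\text{as $n\to\infty$,}
\]
\emph{deterministically} and uniformly in $j$, which is much stronger than any conditional Lindeberg condition.

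Combining these two verifications with the martingale difference property (already established via Gordin's argument on the natural extension) then yields $\hM_n^-\to_{\hmu}\sigma W$ in $D[0,T]$ for each $T>0$, and hence in $D[0,\infty)$ with its topology of uniform convergence on compact sets. The only mildly technical point is the bookkeeping identifying the $\hmu$-moments of $Y_j$ with the $\mu$-moments of $m_n\circ F^j$ through the time reversal $\hF^{-1}$; once this is done, both hypotheses follow essentially immediately from Lemma~\ref{lem:mom} and Corollary~\ref{cor:chin}, and the conclusion is a direct invocation of the martingale FCLT.
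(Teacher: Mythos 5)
Your proposal is correct and follows essentially the same route as the paper: both verify the hypotheses of McLeish's martingale FCLT, controlling the maximal increment via $|m_n|_\infty\ll q_n$ from Corollary~\ref{cor:chin} and reducing the quadratic-variation condition $\sum_{j\le[nt]}X_{n,j}^2\to_{\hmu}\sigma^2t$ to Lemma~\ref{lem:mom} by transferring moments from $(\hZ,\hmu)$ back to $(Z,\mu)$ via $\hF$-invariance and the semiconjugacy. The only cosmetic difference is that you expand the $L^2(\hmu)$ norm term by term, whereas the paper identifies the whole sum $\sum_j X_{n,j}^2$ distributionally with $a_n^{-2}\sum_{j<[nt]}m_n^2\circ F^j$ in one step; both are valid.
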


\begin{proof}
We verify the hypotheses of~\cite[Theorem~3.2]{McLeish74}.
Fix $t\ge0$.
By Corollary~\ref{cor:chin},
\[
\Big|\max_{j\le [nt]}|X_{n,j}|\Big|_1\le a_n^{-1}|m_n|_\infty
	\ll a_n^{-1} q_n \to 0.
\]
By~\cite[Theorem~3.2]{McLeish74}, it therefore remains to show that
$\sum\limits_{1\le j\le [nt]}X_{n,j}^2\to_\hmu \sigma^2 t$.
But
\begin{align*}
\sum_{1\le j\le nt}X_{n,j}^2
 = a_n^{-2}\sum_{j=1}^{[nt]}(\hm_n^2)\circ \hF^{-j}
& =\left(
a_n^{-2}\sum_{j=0}^{[nt]-1}(\hm_n^2)\circ \hF^j\right)\circ \hF^{-[nt]}
\\ & =\left(
a_n^{-2}\sum_{j=0}^{[nt]-1}m_n^2\circ F^j\right)\circ\hpi\circ \hF^{-[nt]}.
\end{align*}
Hence it suffices that
$a_n^{-2}\sum_{j=0}^{[nt]-1}m_n^2\circ F^j\to_\mu  \sigma^2 t$
which follows from Lemma~\ref{lem:mom}.~
\end{proof}

\begin{pfof}{Theorem~\ref{thm:GM}}
Define processes
\[
M_n(t)=a_n^{-1}\sum_{j=0}^{[nt]-1}m_n\circ F^j\,,
\qquad
\tW_n(t)=a_n^{-1}\sum_{j=0}^{[nt]-1}V_n\circ F^j
\]
	on $(Z,\mu)$.
Let $g(u)(t)=u(1)-u(1-t)$ and note that
$M_n\circ\hpi\circ\hF^{-n}=g(\hM_n^-)$ and $g(\sigma W)=_d \sigma W$.
By Lemma~\ref{lem:McL} and
the continuous mapping theorem\footnote{Technical issues about the domain and range of $g$ can be dealt with either by linearly interpolating and passing to $C[0,1]$, or by proceeding as in~\cite[Proposition~4.9]{KM16}.},
\[
	M_n=_d M_n\circ\hpi\circ\hF^{-n}= g(\hM_n^-) \to_{\hmu} g(\sigma W) =_d \sigma W.
\]
But 
$\tW_n(t)= M_n(t)
+a_n^{-1}(\chi_n\circ F^{[nt]}-\chi_n)$
where $\sup_n|\chi_n|_\infty<\infty$ by Proposition~\ref{prop:chin}.
Hence 
$\tW_n \to_\mu \sigma W$.

Finally, define $E_n$ as in~\eqref{eq:En} (with $X$ replaced by $Z$).
As before, $\mu(E_n)\ll(\log \log n)^{-1}$.
On $E_n^c$, we have 
	$(V_n-V)\circ F^j=-\int V1_{\{|V|\le q_n\}}\,d\mu
	=\int V1_{\{|V|> q_n\}}\,d\mu$, $0\le j\le n$, so
	for $W_n$ defined as in~\eqref{eq:Wn},
\begin{align*}
	\sup_{t\in [0,1]}|W_n(t)-\tW_n(t)|_\infty
	& \le  a_n^{-1}n\, \left|\int_Z V1_{\{|V|> q_n\}}\,d\mu\right|
	\ll na_n^{-1}q_n^{-1}=(\log n\log\log n)^{-1/2}.
\end{align*}
Hence $W_n\to_\mu\sigma W$.
\end{pfof}

\section{Nonstandard limit laws for Young towers}
\label{sec:map}

In this section, we consider nonstandard limit laws for a class of nonuniformly hyperbolic systems~\cite{Young98,Young99}.
Throughout, $a_n=(n\log n)^{1/2}$.

\subsection{Exponential Young towers}
\label{sec:exptower}

We start with a Gibbs-Markov map as in Section~\ref{sec:GM}, now denoted 
$(\bF,\bZ,\bmu_Z)$ with partition $\bZ_k$, $k\ge1$. 
We suppose moreover that $\bF$ is full-branch\footnote{The full-branch assumption is mainly for convenience and is satisfied in the applications. It is easy to weaken the assumption significantly but some care is needed since for instance the result of~\cite{Gouezel10b} used in Lemma~\ref{lem:Gouezel} assumes a big image and preimage condition which is stronger than the big images condition in Section~\ref{sec:GM}.}, so $\bF$ is a measure-theoretic bijection from $\bZ_k$ onto $\bZ$ for all $k$.
Let $\tau:\bZ\to\Z^+$ be a piecewise constant return time with $\bmu_Z(\tau>n)=O(e^{-cn})$ for some $c>0$.
Define the tower $\bDelta=\{(z,\ell):z\in \bZ,\,0\le \ell<\tau(z)\}$ and tower map
\[
\bfD:\bDelta\to \bDelta\,, \qquad
\bfD(z,\ell)=\begin{cases} (z,\ell+1) , &0\le \ell<\tau(z)-1 \\
(\bF z,0) , &\ell=\tau(z)-1 
\end{cases}\,.
\]
An ergodic $\bfD$-invariant probability measure is given by
$\bmu_\Delta=(\bmu_Z\times{\rm counting})/\bar\tau$
where $\bar\tau=\int_\bZ\tau\,d\bmu_Z$.
We call $(\bDelta,\bmu_\Delta)$ a {\em one-sided exponential Young tower}.

Let $(Z,d_Z)$ be a metric space with Borel probability measure $\mu_Z$. Let $F:Z\to Z$ be an ergodic measure-preserving transformation and $\bar\pi:Z\to\bZ$ a measure-preserving semiconjugacy.
The return time $\tau:\bZ\to\Z^+$ lifts to a return time on $Z$ which we also denote by $\tau$.
Define the (two-sided) exponential tower 
$\Delta=\{(z,\ell):z\in Z,\,0\le \ell<\tau(z)\}$ and tower map
\[
f_\Delta:\Delta\to \Delta\,, \qquad
f_\Delta(z,\ell)=\begin{cases} (z,\ell+1) , &0\le \ell<\tau(z)-1 \\
(Fz,0) , &\ell=\tau(z)-1 
\end{cases}\,,
\]
with ergodic invariant probability measure $\mu_\Delta=(\mu_Z\times{\rm counting})/\bar\tau$.
We have partitions $\{Z_k\}$ of $Z$ and $\{\Delta_{k,\ell}\}$ of $\Delta$ where
$Z_k=\bar\pi^{-1}(\bZ_k)$ and
$\Delta_{k,\ell}=Z_k\times\{\ell\}$.

There are further properties of exponential Young towers which are only required in the proof of Theorem~\ref{thm:map} below in an argument identical to~\cite[Lemma~5.3]{BalintGouezel06}. Hence, we refer to~\cite{BalintGouezel06} for the statement of these properties.

An observable $K:\Delta\to\R$ is called \emph{piecewise constant} if $K$ is constant on each partition element $\Delta_{k,\ell}$.

\begin{lemma} \label{lem:Gouezel}
Let $K:\Delta\to\R$ be piecewise constant and define
\[
K_\tau:Z\to\R\,, \qquad K_\tau(z)=\sum_{\ell=0}^{\tau(z)-1}K(z,\ell).
\]
If $K$ satisfies a nonstandard CLT with variance $\sigma^2>0$, then
$\mu_Z(|K_\tau|>n)\sim \bar\tau\sigma^2 n^{-2}$
 as $n\to\infty$.
\end{lemma}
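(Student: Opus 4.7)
The plan is to transfer the nonstandard CLT for $K$ on the tower $(\Delta,\mu_\Delta)$ into a nonstandard CLT for the induced observable $K_\tau$ on $(Z,\mu_Z)$ with variance $\bar\tau\sigma^2$, and then invoke the converse direction of the nonstandard Gaussian domain-of-attraction characterisation for piecewise constant observables on mixing Gibbs-Markov maps.

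Since $\tau$ is piecewise constant on $\bZ$ and $K$ is piecewise constant on each $\Delta_{k,\ell}$, the induced observable $K_\tau(z)=\sum_{\ell=0}^{\tau(z)-1}K(z,\ell)$ is constant on each $Z_k=\bar\pi^{-1}(\bZ_k)$ and factors as $K_\tau=\bar K_\tau\circ\bar\pi$ with $\bar K_\tau:\bZ\to\R$ piecewise constant, so $\mu_Z(|K_\tau|>n)=\bmu_Z(|\bar K_\tau|>n)$ and the tail question may be posed on the Gibbs-Markov base. To transfer the CLT, I fix $(z,0)\in Z\times\{0\}$ and use the lap-number decomposition
\[
\sum_{j=0}^{n-1}K\circ f_\Delta^j(z,0)=\sum_{k=0}^{N_n(z)-1}K_\tau(F^kz)+R_n(z),
\]
where $N_n(z)=\max\{m\ge 0:\sum_{k=0}^{m-1}\tau(F^kz)\le n\}$ and $R_n$ is the partial sum of $K$ over the unfinished final excursion. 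Standard estimates using the exponential tail $\bmu_Z(\tau>n)=O(e^{-cn})$ together with the further properties of exponential Young towers referenced from~\cite{BalintGouezel06} give $a_n^{-1}|R_n|\to 0$ in probability under $\mu_Z$. Zweim\"uller's theorem~\cite{Zweimuller07} then transfers the weak convergence of $a_n^{-1}\sum_{j<n}K\circ f_\Delta^j$ from $\mu_\Delta$ to the normalised restriction to $Z\times\{0\}$, which is $\mu_Z$; combining this with $N_n/n\to 1/\bar\tau$ a.s.\ and $a_n/a_{[n/\bar\tau]}\to\bar\tau^{1/2}$ yields a nonstandard CLT for $K_\tau$ on $(Z,\mu_Z)$ with variance $\bar\tau\sigma^2$, which projects to one for $\bar K_\tau$ on $(\bZ,\bmu_Z)$.

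The decisive step is then the converse half of the characterisation of the nonstandard Gaussian domain of attraction for a piecewise constant observable $V$ on a mixing Gibbs-Markov map, originating with Aaronson-Denker~\cite{AD01b} and refined in Gouezel~\cite{Gouezel10b}: such $V$ satisfies a nonstandard CLT with variance $\sigma_0^2$ if and only if $\mu(|V|>n)\sim\sigma_0^2 n^{-2}$. Applied with $V=\bar K_\tau$ and $\sigma_0^2=\bar\tau\sigma^2$, this gives $\mu_Z(|K_\tau|>n)=\bmu_Z(|\bar K_\tau|>n)\sim\bar\tau\sigma^2 n^{-2}$, as required. The main obstacle will be isolating in the cited literature the converse direction in a directly applicable form, as most references treat only the forward direction (tails $\Rightarrow$ CLT); if a clean citation is unavailable, the converse can be extracted from the Nagaev-Gou\"ezel spectral expansion of the Fourier-perturbed transfer operator of $\bF$, where the assumed nonstandard CLT forces the logarithmic singularity of the leading eigenvalue at the origin characteristic of a symmetric tail $\sim\sigma_0^2 x^{-2}$, a Tauberian argument then converting this into the desired tail asymptotic.
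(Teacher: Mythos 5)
Your proposal follows essentially the same route as the paper: reduce to the Gibbs--Markov base $\bZ$ via piecewise constancy, induce the nonstandard CLT down to $K_\tau$ on $(\bZ,\bmu_Z)$ with variance $\bar\tau\sigma^2$ (the paper outsources this step to \cite[Appendix~A]{MV20}, using that $\tau\in L^p$ for all finite $p$ so that $a_n^{-1}\sum_{j<n}(\tau-\bar\tau)\circ\bF^j\to_{\bmu_Z}0$, rather than your hand-rolled lap-number/Zweim\"uller argument), and then convert the CLT back into the tail asymptotic. Your worry about the final step is unfounded: \cite{Gouezel10b} is a genuine characterisation (an ``if and only if'') of weak convergence of Birkhoff sums for piecewise constant observables of Gibbs--Markov maps, and the direction CLT~$\Rightarrow$~tails is exactly what the paper invokes.
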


\begin{proof}
We follow the proof of~\cite[Lemma~5.1(c)]{MV20}.
Since $\tau$ and $K_\tau$ are piecewise constant, they are 
well-defined and piecewise constant on the domain $\bZ$ of the Gibbs-Markov map $\bF:\bZ\to\bZ$. In particular, we can reduce from $\Delta$ and $Z$ to $\bDelta$ and $\bZ$.

Since $\tau\in L^p(\bZ)$ for all finite $p$, it follows from
Remark~\ref{rmk:GM} that $\tau$ satisfies a (possibly degenerate) standard CLT
on $(\bZ,\bmu_Z)$.  In particular,
	$a_n^{-1}\sum_{j=0}^{n-1}(\tau-\bar\tau)\circ \bF^j\to_{\bmu_Z} 0$.
By assumption,
$K$ satisfies a nonstandard CLT on $(\bDelta,\bmu_\Delta)$ with variance $\sigma^2>0$.
Inducing as in~\cite[Appendix~A]{MV20} (with $X=\bDelta$, $Y=\bZ$, $b_n=a_n$, and $\alpha=2$ in~\cite[Remark~A.3]{MV20}),
$K_\tau$ satisfies a nonstandard CLT 
on $(\bZ,\bmu_Z)$ with variance $\bar\tau\sigma^2>0$.
Since $K_\tau$ is piecewise constant on $\bZ$, 
it follows from~\cite{Gouezel10b} that
  $\bmu_Z(|K_\tau|>n)\sim \bar\tau\sigma^2 n^{-2}$.
\end{proof}

\subsection{Subexponential Young towers}
\label{sec:subexptower}

Now let $R:\Delta\to\Z^+$ be
a distinguished integrable piecewise constant observable.
Define $R_\tau:Z\to\Z^+$ as in Lemma~\ref{lem:Gouezel}.
Notice that $\int_Z R_\tau\,d\mu_Z=\bar R\bar\tau$ where $\bar R=\int_\Delta R\,d\mu_\Delta$.

We use $R_\tau$ to define a new (subexponential) tower
$\Gamma=\{(z,\ell):z\in Z,\,0\le \ell<R_\tau(z)\}$ and tower map
\[
f_\Gamma:\Gamma\to \Gamma\,, \qquad
f_\Gamma(z,\ell)=\begin{cases} (z,\ell+1) ,& 0\le \ell<R_\tau(z)-1 \\
(Fz,0) , &\ell=R_\tau(z)-1 
\end{cases}\,,
\]
with ergodic $f_\Gamma$-invariant probability measure 
$\mu_\Gamma=(\mu_Z\times{\rm counting})/\bar R\bar\tau$.

Let $d_\theta$ be the metric on $\bZ$ defined in Section~\ref{sec:GM}. 
An observable $v:\Gamma\to\R$ is \emph{dynamically H\"older} if it is bounded and there is a constant $C>0$ such that
\begin{equation} \label{eq:dyn}
|v(z,\ell)-v(z',\ell)|\le C\big(d_Z(z,z')+d_\theta(\bar\pi z,\bar\pi z')\big)
\end{equation} 
for all $z,z'\in Z_k$,  $k\ge1$, $0\le\ell< R_\tau|Z_k$.

We can now state and prove the main theorem of this section.

\begin{thm} \label{thm:map}
Let $v:\Gamma\to\R$ be a dynamically H\"older observable 
with $\int_\Gamma v\,d\mu_\Gamma=0$.
Define 
\[
V:\Delta\to\R\,, \qquad
V(z,j)=\sum_{\ell=R_{j-1}(z)}^{R_j(z)-1}v(z,\ell),
\]
where $R_j(z)=\sum_{k=0}^j R(z,k)$.
Suppose that 
\[
V=K+H
\]
where $K:\Delta\to\R$ is piecewise constant
 and $H\in L^{2+\eps}(\Delta)$ for some $\eps>0$.
\begin{enumerate}[(a)]
\item[{\rm (a)}]
If $R$ and $K$ satisfy nonstandard CLTs with variances $\sigma_R^2>0$ and
$\sigma_K^2>0$, then
$v$ satisfies a nonstandard WIP with variance
$\bar R^{-1}\sigma_K^2$.
\item[{\rm (b)}]
If $K=0$, then there exists $\widetilde\sigma^2\ge0$ such that $v$ satisfies a standard CLT
with variance $\widetilde\sigma^2$.
\end{enumerate}
\end{thm}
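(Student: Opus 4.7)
The plan is to identify $\Gamma$ with the tower over $\Delta$ whose roof function is $R$; under this identification $f_\Gamma$ is the associated tower map and the $V$ defined in the statement is precisely the induced observable $v^\Delta$ from $\Gamma$ down to $\Delta$. Thus Theorem~\ref{thm:inducemap} reduces both parts to proving the corresponding limit law for $V$ on $(\Delta,\mu_\Delta)$ together with the max-type conditions (I2), (I3) at the appropriate scale.

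For part (a), the main step is a nonstandard WIP for $V=K+H$ on $\Delta$ with variance $\sigma_K^2$. Since $K$, $\tau$ and $R$ are piecewise constant, each descends to the one-sided tower $\bDelta$ over the Gibbs-Markov base $\bZ$. Applying Lemma~\ref{lem:Gouezel} to $K$ yields the tail $\bmu_Z(|K_\tau|>n)\sim\bar\tau\sigma_K^2 n^{-2}$, and Theorem~\ref{thm:GM} then gives the nonstandard WIP for $K_\tau$ on $\bZ$ with variance $\bar\tau\sigma_K^2$. A further application of Theorem~\ref{thm:inducemap} from $\bZ$ up to $\bDelta$, with roof $\tau$ and observable $K$, delivers the nonstandard WIP for $K$ on $\bDelta$ with variance $\bar\tau^{-1}\cdot\bar\tau\sigma_K^2=\sigma_K^2$; here (I2)--(I3) follow from Propositions~\ref{prop:I3} and~\ref{prop:max} via the $n^{-2}$ tail of $K_\tau$, the exponential tail of $\tau$, and an oscillation control for the partial sums of $K$ within a single $\tau$-excursion of the kind carried out in~\cite[Lemma~5.3]{BalintGouezel06} (this is where the additional structural properties of the exponential Young tower enter). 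Since $K$ is piecewise constant the WIP lifts trivially from $\bDelta$ to the two-sided tower $\Delta$. For the residual term $H\in L^{2+\eps}(\Delta)$, classical theory of exponential Young towers gives a standard WIP, whose contribution is negligible in $C[0,1]$ at the slower scale $(n\log n)^{1/2}$; hence $V$ satisfies the nonstandard WIP on $\Delta$ with variance $\sigma_K^2$.

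To then apply Theorem~\ref{thm:inducemap} from $\Delta$ up to $\Gamma$ I verify (I2) and (I3) at scale $a_n=(n\log n)^{1/2}$. Proposition~\ref{prop:max} applied to $V$ handles (I2), using the $n^{-2}$ tails from $K$ (via Lemma~\ref{lem:Gouezel}) together with $H\in L^{2+\eps}$. For (I3), Proposition~\ref{prop:I3} reduces the problem to controlling $\tv^\Delta(z,j)=\max_{0\le m\le R(z,j)}\bigl|\sum_{k=0}^{m-1}v(z,R_{j-1}(z)+k)\bigr|\le|v|_\infty R(z,j)$, and a second application of Lemma~\ref{lem:Gouezel}, now to $R$ with variance $\sigma_R^2>0$, yields $\bmu_Z(R_\tau>n)\sim\bar\tau\sigma_R^2 n^{-2}$, feeding into Proposition~\ref{prop:max}. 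Theorem~\ref{thm:inducemap} then delivers the nonstandard WIP for $v$ on $\Gamma$ with variance $\bar R^{-1}\sigma_K^2$, completing part (a).

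Part (b) is considerably easier: with $K=0$ we have $V=H\in L^{2+\eps}(\Delta)$, so $V$ satisfies a standard WIP on $\Delta$ with some variance $\widetilde\sigma^2\ge0$ by the classical theory of exponential Young towers; condition (I2) of Theorem~\ref{thm:inducemap} at scale $n^{1/2}$ is automatic from $V\in L^2$, so the CLT conclusion of Theorem~\ref{thm:inducemap} yields the required standard CLT for $v$ on $\Gamma$. The main anticipated obstacle is the technical bookkeeping between the three towers $\bDelta$, $\Delta$, $\Gamma$ and, in particular, ensuring that the oscillation control for partial sums of $K$ within a $\tau$-excursion genuinely follows from the $n^{-2}$ tail of $K_\tau$ combined with the structural features of the exponential Young tower; this is precisely the point where~\cite[Lemma~5.3]{BalintGouezel06} is invoked.
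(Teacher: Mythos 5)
Your architecture differs from the paper's: you induce in two steps ($\bZ\to\Delta\to\Gamma$, first proving a nonstandard WIP for $V$ on $(\Delta,\mu_\Delta)$ and then applying Theorem~\ref{thm:inducemap} with roof $R$), whereas the paper inducts in a single step, applying Theorem~\ref{thm:inducemap} with $\Gamma$, $Z$, $R_\tau$ playing the roles of $X^r$, $X$, $r$ and verifying (I1) and~\eqref{eq:I3} directly on $(Z,\mu_Z)$. The one-step route is not just cosmetically simpler: condition~\eqref{eq:I3} there reduces to the trivial bound $\tV_\tau\le|v|_\infty R_\tau$ plus Lemma~\ref{lem:Gouezel} applied to $R$, while your intermediate stop at $\Delta$ forces you to control, in addition, the oscillation of partial sums of $K$ and of $H$ within a single $\tau$-excursion, and to establish tail bounds for $V$ and $R$ with respect to $\mu_\Delta$ rather than $\mu_Z$ (the exponential tails of $\tau$ make these fillable, e.g.\ via $|K|\le|v|_\infty R+|H|$, but none of this is carried out in your sketch, and your citation of~\cite[Lemma~5.3]{BalintGouezel06} for this purpose is a misattribution — that lemma is a Gordin-type martingale--coboundary decomposition, not an excursion oscillation bound).

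The genuine gap is your treatment of $H$ (and, in part (b), of $V$ itself). You assert that ``classical theory of exponential Young towers gives a standard WIP'' for $H\in L^{2+\eps}(\Delta)$. This is false: mere integrability of an observable on a Young tower yields no limit theorem, and the theorem's hypotheses impose no regularity on $H$ beyond $L^{2+\eps}$. The regularity actually available is that $H=V-K$ has $d_\theta$-H\"older constants of order $R$ on partition elements, which are unbounded (and only in $L^p$ for $p<2$), so the standard spectral/martingale theory for exponential towers does not apply directly. The paper resolves this by descending to the Gibbs--Markov base: $H_\tau\in L^{2+\eps_1}(Z)$ has integrable H\"older constants of order $R_\tau$, so~\cite[Lemma~5.3]{BalintGouezel06} yields $\hH_\tau=m+\chi\circ F-\chi$ with $m,\chi\in L^p$, $p>2$, and Doob's inequality plus the elementary $L^p$ maximal bound for $\chi$ give $a_n^{-1}\max_{0\le k\le n}\big|\sum_{j=0}^{k-1}\hH_\tau\circ F^j\big|\to_{\mu_Z}0$; part (b) runs the same decomposition on $V_\tau$ and invokes Brown's martingale WIP. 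Your proposal allocates this lemma to a different (and inappropriate) task and leaves the negligibility of $H$ resting on an unjustified claim; as written, this step would fail. The remainder of your scheme (Lemma~\ref{lem:Gouezel} for the tails of $K_\tau$ and $R_\tau$, Theorem~\ref{thm:GM} for the nonstandard WIP of $K_\tau$, Propositions~\ref{prop:I3} and~\ref{prop:max} for the maximal conditions) is in line with the paper.
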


\begin{proof}
(a) Induce further to obtain observables $V_\tau,\,\tV_\tau:Z\to\R$,
\[
V_\tau(z)=\sum_{\ell=0}^{R_\tau(z)-1}v(z,\ell)=\sum_{j=0}^{\tau(z)-1}V(z,j)\,,
\qquad
\tV_\tau(z)= \max_{0\leq k\le R_\tau(z)} \left|\sum_{\ell=0}^{k-1}v(z,\ell)\right|.
\]
To obtain the nonstandard WIP for $v$ with variance $\bar R^{-1}\sigma^2$,
it suffices to verify the hypotheses of Theorem~\ref{thm:inducemap}
with $\Gamma$, $Z$, $R_\tau$ playing the roles of $X^r$, $X$, $r$. 
By Proposition~\ref{prop:I3}, it suffices to prove:
\begin{enumerate}
\item[(I1)] $V_\tau$ satisfies a nonstandard WIP with variance $\bar\tau\sigma^2$; 
\item[\eqref{eq:I3}] $a_n^{-1}\max\limits_{0\le j\le n}\tV_\tau\circ F^j\to_{\mu_Z} 0$ as $n\to\infty$;
\end{enumerate}
on the probability space $(Z,\mu_Z)$.

Now, $|\tV_\tau|\le |v|_\infty R_\tau$.
By Lemma~\ref{lem:Gouezel}, 
\[
\mu_Z(|\tV_\tau|>n)\le
\mu_Z(R_\tau>n/|v|_\infty)\ll n^{-2}.
\]
Hence condition~\eqref{eq:I3} follows from Proposition~\ref{prop:max}.

Write
\[
V_\tau=K_\tau+H_\tau
\quad\text{where} \quad
H_\tau(z)
=\sum_{\ell=0}^{\tau(z)-1}H(z,\ell).
\]

By Lemma~\ref{lem:Gouezel}, $\mu_Z(|K_\tau|>n)\sim \bar\tau\sigma^2 n^{-2}$.
Since $K_\tau$ is locally constant, it is well defined and locally constant on the Gibbs-Markov base $\bF:\bZ\to\bZ$.
 By Theorem~\ref{thm:GM}, $K_\tau$ satisfies the nonstandard WIP with variance $\bar\tau\sigma^2 n^{-2}$.
Hence, to verify condition~(I1), it remains to show that the contribution from $\hH_\tau=H_\tau-\int_Z H_\tau\,d\mu_Z$ is negligible.
This is mainly~\cite[Lemmas~5.3 and~5.4]{BalintGouezel06} (written out also in~\cite[Section~6]{MV20}). 

By assumption, $H\in L^{2+\eps}(\Delta)$ and $\tau:Z\to\Z^+$ has exponential tails.
Hence $H_\tau\in L^{2+\eps_1}(Z)$ for any $\eps_1\in(0,\eps)$.
The H\"older constants of $H_\tau$ on partition elements $Z_k$ are unbounded but are of order
$R_\tau$ and so are integrable.
By~\cite[Lemma~5.3]{BalintGouezel06}, a Gordin-type argument~\cite{Gordin69} shows that 
$\hH_\tau=m+\chi\circ F-\chi$
	where $m,\chi\in L^p(Z)$ for some $p>2$ and $\{m\circ F^j:j\ge0\}$ is a reverse martingale-difference sequence.
By Doob's inequality,
$\big|\max_{0\le k\le n}|\sum_{j=0}^{k-1}m\circ F^j|\big|_2\le 4
\big|\sum_{j=0}^{n-1}m\circ F^j\big|_2= 4n^{1/2}|m|_2$.
Also,
$\big|\max_{0\le j\le n}|\chi\circ F^j|\big|_p\le n^{1/p}|\chi|_p$.
Hence
\(
	a_n^{-1}\max_{0\le j\le n}|\hH_\tau\circ F^j|\to_{\mu_Z}0
\)
demonstrating the negligibility of $\hH_\tau$.

\vspace{1ex} \noindent (b)
Define $V_\tau:Z\to\R$ as in part~(a).
We again apply Theorem~\ref{thm:inducemap}
with $\Gamma$, $Z$, $R_\tau$ playing the roles of $X^r$, $X$, $r$. 
To obtain the standard CLT, we must show that
\begin{enumerate}[(I1)]
\item[(I1)] $V_\tau$ satisfies a standard WIP;
\item[(I2)] $n^{-1/2}\max_{0\le j\le n}|V_\tau|\circ F^j\to_{\mu_Z}0$ as $n\to\infty$;
\end{enumerate}
on the probability space $(Z,\mu_Z)$.

The argument applied above for $H$, but now applied to $V$, shows that
$V_\tau=m+\chi\circ F-\chi$
where $m,\chi\in L^p(Z)$ for some $p>2$ and $\{m\circ F^j:j\ge0\}$ is a reverse martingale-difference sequence.
Again, $\big|\max_{0\le j\le n}|\chi\circ F^j|\big|_p\le n^{1/p}|\chi|_p$.
Hence by the WIP for $L^2$ martingales~\cite{Brown71},
we obtain the standard WIP for $V_\tau$ with variance
$\int_Z m^2\,d\mu_Z$. This verifies condition~(I1).

Since $V_\tau\in L^p$, $p>2$, it follows that
$\big|\max_{0\le j\le n}|V_\tau|\circ F^j\big|_p\ll n^{1/p}$,
verifying condition~(I2).
\end{proof}

\begin{rmk}
The assumption that $\tau$ has exponential tails can be weakened significantly.
Certainly we used that $\tau\in L^2(Z)$ in the proof of Lemma~\ref{lem:Gouezel}.
In addition, we require that $H$ and $\tau$ are sufficiently well-behaved that
$H_\tau\in L^p(Z)$ for some $p>2$.
\end{rmk}

\subsection{Underlying dynamical systems}
\label{sec:UDS}

Let $X$ be a metric space with Borel probability $\mu$ and $f:X\to X$ a measure-preserving transformation.
Fix a measurable subset $Y\subset X$ and let $R_0:Y\to\Z^+$ be the first return time $R_0(y)=\inf\{n\ge1:f^ny\in Y\}$.
Define the first return map
\[
        f_Y=f^{R_0}:Y\to Y\,, \qquad f_Y(y)=f^{R_0(y)}(y).
\]
An $f_Y$-invariant probability measure on $Y$ is obtained by normalising $\mu|_Y$.

We assume that $(f_Y,Y,\mu_Y)$ is modelled by a (two-sided) exponential Young tower.
That is, there exists $(f_\Delta,\Delta,\mu_\Delta)$ built over a map $F:Z\to Z$ with Gibbs-Markov quotient $\bF:\bZ\to\bZ$ and return time $\tau:Z\to\Z^+$ with exponential tails, exactly as in Subsection~\ref{sec:exptower}, such that $Z$ is a Borel subset of $Y$
and such that the semiconjugacy
\[
        \pi_\Delta:\Delta\to Y, \qquad \pi_\Delta(z,\ell)=f_Y^\ell z,
\]
satisfies $\pi_{\Delta\,*}\mu_\Delta=\mu_Y$.
We assume further that $R= R_0\circ\pi_\Delta:\Delta\to\Z^+$ is piecewise constant.

By construction, $\Delta$ and $R$ satisfy the assumptions in Subsection~\ref{sec:subexptower},
so we can use $R_\tau:Z\to\Z^+$ to define a subexponential tower map $f_\Gamma:\Gamma\to\Gamma$.
It is easily verified that 
\[
        \pi_\Gamma:\Gamma\to X, \qquad \pi_\Gamma(z,\ell)=f^\ell z,
\]
defines a measure-preserving semiconjugacy between $(f_\Gamma,\Gamma,\mu_\Gamma)$ and $(f,X,\mu)$. Hence we obtain the following immediate consequence of
Theorem~\ref{thm:map}.
We say that an observable $v_0:X\to\R$
	is \emph{dynamically H\"older} if
	the lifted observable 
$v_0\circ\pi_\Gamma:\Gamma\to\R$ is dynamically H\"older. 

\begin{cor} \label{cor:map}
Let $v_0:X\to\R$ be a dynamically H\"older observable 
	with $\int_X v_0\,d\mu{=0}$.  Define
$V:\Delta\to\R$ as in Theorem~\ref{thm:map} with $v=v_0\circ \pi_\Gamma$.
Suppose that 
\[
V=K+H
\]
where $K:\Delta\to\R$ is piecewise constant
 and $H\in L^{2+\eps}(\Delta)$ for some $\eps>0$.
\begin{enumerate}[(a)]
\item[{\rm (a)}]
If $R$ and $K$ satisfy nonstandard CLTs with variances $\sigma_R^2>0$ and
$\sigma_K^2>0$, then
$v_0$ satisfies a nonstandard WIP with variance
$\bar R^{-1}\sigma_K^2$.
\item[{\rm (b)}]
If $K=0$, then there exists $\widetilde\sigma^2\ge0$ such that $v_0$ satisfies a standard CLT
with variance $\widetilde\sigma^2$. \qed
\end{enumerate}
\end{cor}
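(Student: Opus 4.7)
The plan is to apply Theorem~\ref{thm:map} to the lifted observable $v=v_0\circ\pi_\Gamma:\Gamma\to\R$ and then transfer the resulting limit law down to $X$ along the measure-preserving semiconjugacy $\pi_\Gamma$. By the definition of ``dynamically H\"older'' given just above the statement, $v$ is dynamically H\"older on $\Gamma$; since $\pi_\Gamma$ is measure-preserving, $\int_\Gamma v\,d\mu_\Gamma=\int_X v_0\,d\mu=0$; and the associated observable $V:\Delta\to\R$ admits the decomposition $V=K+H$ by hypothesis. Hence all hypotheses of Theorem~\ref{thm:map} are in place, and invoking part~(a) or part~(b) yields the nonstandard WIP with variance $\bar R^{-1}\sigma_K^2$ (respectively a standard CLT with some variance $\widetilde\sigma^2\ge 0$) for $v$ on $(\Gamma,\mu_\Gamma)$.

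The transfer to $(X,\mu)$ is automatic. Because $\pi_\Gamma\circ f_\Gamma=f\circ\pi_\Gamma$ and $v=v_0\circ\pi_\Gamma$, the Birkhoff sums satisfy
\[
\sum_{j=0}^{n-1}v\circ f_\Gamma^j
=\bigg(\sum_{j=0}^{n-1}v_0\circ f^j\bigg)\circ\pi_\Gamma,
\]
so the $C[0,1]$-valued processes $W_n$ defined through~\eqref{eq:Wn} on $\Gamma$ and on $X$ are related by pullback through $\pi_\Gamma$. Since $\pi_{\Gamma\,*}\mu_\Gamma=\mu$, the two processes have identical distributions, so weak convergence of one is equivalent to weak convergence of the other, with the same limiting law; the analogous equivalence holds for CLT convergence of the one-dimensional sums. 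This yields both conclusions for $v_0$.

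I expect no genuine obstacle: the entire substantive content lives in Theorem~\ref{thm:map}, and Corollary~\ref{cor:map} is simply the formal statement that the nonstandard WIP and the standard CLT descend unchanged along measure-preserving semiconjugacies. The only bookkeeping point worth noting is that the definition of ``dynamically H\"older'' for $v_0:X\to\R$ is designed exactly so that $v_0\circ\pi_\Gamma$ satisfies~\eqref{eq:dyn} on $\Gamma$, which is why the hypothesis of Theorem~\ref{thm:map} transfers to the present setting without any additional work.
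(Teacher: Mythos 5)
Your proposal is correct and matches the paper's intent exactly: the paper declares the corollary an immediate consequence of Theorem~\ref{thm:map}, having just noted that $\pi_\Gamma$ is a measure-preserving semiconjugacy, and your argument simply spells out why that makes the transfer of the WIP/CLT from $(\Gamma,\mu_\Gamma)$ to $(X,\mu)$ automatic (equality of Birkhoff sums under pullback plus $\pi_{\Gamma\,*}\mu_\Gamma=\mu$). No issues.
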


\begin{rmk} The variance $\widetilde\sigma^2$ in Corollary~\ref{cor:map}(b) is typically nonzero in our applications, where ``typically'' is interpreted in the very strong sense that $\widetilde\sigma^2=0$ only within a closed subspace of infinite codimension amongst H\"older observables $v_0$. See for example the discussion in~\cite[End of Section~4]{HM07}.
\end{rmk}

\begin{rmk}
In the applications to be considered in Sections~\ref{sec:PM} and~\ref{sec:billiard}, H\"older observables $v_0:X\to\R$ are dynamically H\"older.

\medskip
It is often the case that $H=O(R^{1-\delta})$ for some $\delta>0$. Since $R\in L^p(\Delta)$ for all $p<2$,
the integrability assumption on $H$ in Corollary~\ref{cor:map} is automatic.

In the examples, the nonstandard CLT for $K$ is degenerate if and only if $K=0$.
Hence there is a dichotomy whereby either part (a) or part (b) of Corollary~\ref{cor:map} applies.
\end{rmk}

\section{Limit laws for suspension flows}
\label{sec:flow}

Let $(f_\Gamma,\Gamma,\mu_\Gamma)$, $(f_\Delta,\Delta,\mu_\Delta)$, $R$, etc, be as in Section~\ref{sec:map} and let $h:\Gamma\to(0,\infty)$ be a dynamically H\"older (hence bounded) roof function.
We form the suspension
\[
\Gamma^h=\{(x,u):x\in\Gamma, 0\le u< h(x)\}/\sim\,,
\quad  (x,h(x))\sim (f_\Gamma(x),0),
\]
and the suspension flow $g_t(x,u)=(x,u+t)$ computed modulo identifications,
with invariant probability measure 
$\mu^h=(\mu_\Gamma\times{\rm Lebesgue})/\bar h$
where $\bar h=\int_\Gamma h\,d\mu_\Gamma$.

\begin{thm} \label{thm:flow}
Let $v:\Gamma^h\to\R$ be a bounded observable 
with $\int_{\Gamma^h} v\,d\mu^h=0$.
Define
\[
v_h:\Gamma\to\R, \qquad
v_h(x)=\int_0^{h(x)} v(x,u)\,du,
\]
and assume that $v_h$ is dynamically H\"older.
Define
$V:\Delta\to\R$ as in Theorem~\ref{thm:map} with $v$ replaced by $v_h$.
Suppose that 
\[
V=K+H
\]
where $K$ is piecewise constant
and $H\in L^{2+\eps}(\Delta)$ for some $\eps>0$.

\begin{enumerate}[(a)]
\item[{\rm (a)}]
If $R$ and $K$ satisfy nonstandard CLTs with variance  $\sigma_R^2>0$ and $\sigma_K^2>0$, then
$v$ satisfies a nonstandard WIP with variance
$\bar h^{-1}\bar R^{-1}\sigma_K^2$.
\item[{\rm (b)}]
If $K=0$, then there exists $\widetilde\sigma^2\ge0$, typically nonzero, such that $v$ satisfies a standard CLT
with variance $\widetilde\sigma^2$.
\end{enumerate}
\end{thm}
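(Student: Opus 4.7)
The plan is to invoke Theorem~\ref{thm:induceflow} to induce the limit law from a discrete-time base up to the suspension flow $\Gamma^h$. The natural choice of base differs between the two cases: for part~(a) one takes the base to be $\Gamma$ itself, whereas for part~(b) one must bypass $\Gamma$ and descend directly to the exponential base $Z$.

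For part~(a), first apply Theorem~\ref{thm:map}(a) to the dynamically H\"older observable $v_h:\Gamma\to\R$; its associated tower observable is precisely the $V=K+H$ appearing in the hypothesis, so the theorem yields the nonstandard WIP for $v_h$ on $(\Gamma,\mu_\Gamma)$ with variance $\bar R^{-1}\sigma_K^2$. This is hypothesis~(I1) of Theorem~\ref{thm:induceflow}. Since $v$ and $h$ are both bounded, $|v_h(x)|\le|v|_\infty h(x)\le|v|_\infty|h|_\infty$ and $|Q(x,u)|=\bigl|\int_0^u v(x,s)\,ds\bigr|\le|v|_\infty|h|_\infty$ are uniformly bounded, so (I2) and (I3) are immediate. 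Theorem~\ref{thm:induceflow} then delivers the nonstandard WIP for $v$ on $(\Gamma^h,\mu^h)$ with variance $\bar h^{-1}\bar R^{-1}\sigma_K^2$.

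For part~(b) the direct analogue of this strategy fails, because Theorem~\ref{thm:map}(b) only delivers a standard CLT (not the WIP) for $v_h$ on $\Gamma$, and condition~(I3) of Theorem~\ref{thm:inducemap} is not available when $R_\tau$ has $n^{-2}$-tails. The remedy is to identify $\Gamma^h$ with the suspension flow of $(F,Z,\mu_Z)$ over the integrable roof $\hat h(z)=\sum_{\ell=0}^{R_\tau(z)-1}h(z,\ell)$; under this identification the induced observable on $Z$ is precisely $V_\tau(z)=\sum_{\ell=0}^{R_\tau(z)-1}v_h(z,\ell)$, and $\int_Z\hat h\,d\mu_Z=\bar h\bar R\bar\tau$. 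The martingale step inside the proof of Theorem~\ref{thm:map}(b) already provides, via a Gordin decomposition $V_\tau=m+\chi\circ F-\chi$ with $m,\chi\in L^p(Z)$ for some $p>2$, the standard WIP for $V_\tau$ on $(Z,\mu_Z)$ with variance $\widetilde\sigma_0^2=\int_Z m^2\,d\mu_Z$. This is (I1); condition (I2) follows from $V_\tau\in L^p$, $p>2$, by Markov's inequality. Since we only seek the CLT, condition (I3) is unnecessary, and Theorem~\ref{thm:induceflow} yields the standard CLT for $v$ on $\Gamma^h$ with variance $\widetilde\sigma^2=(\bar h\bar R\bar\tau)^{-1}\widetilde\sigma_0^2$.

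The main obstacle is conceptual rather than computational: one must recognize that the natural two-step inducing $Z\to\Gamma\to\Gamma^h$ is incompatible with the hypotheses of Theorem~\ref{thm:induceflow} in case~(b), and replace it with the one-step inducing $Z\to\Gamma^h$ which sidesteps the need for a WIP on $\Gamma$. Once this bypass is in place, everything reduces to ingredients already contained in the proofs of Theorems~\ref{thm:map} and~\ref{thm:induceflow}.
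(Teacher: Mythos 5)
Your proposal is correct and follows essentially the same route as the paper: part (a) applies Theorem~\ref{thm:map}(a) to $v_h$ and then Theorem~\ref{thm:induceflow} over the base $\Gamma$ with (I2), (I3) trivial by boundedness of $v$ and $h$; part (b) regards $\Gamma^h$ as a suspension over $Z$ with the unbounded roof $\varphi(z)=\sum_{\ell=0}^{R_\tau(z)-1}h(z,\ell)$ and verifies (I1), (I2) via the Gordin decomposition from the proof of Theorem~\ref{thm:map}(b), exactly as the paper does. Your added explanation of \emph{why} the two-step inducing $Z\to\Gamma\to\Gamma^h$ must be bypassed in case (b) is a correct piece of motivation that the paper leaves implicit.
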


\begin{proof}
(a) Define 
\[
Q:\Gamma^h\to\R\,, 
\qquad
Q(x,u)=\int_0^u v(x,s)\,ds.
\]
We apply Theorem~\ref{thm:induceflow}
with $\Gamma^h$, $\Gamma$, $h$ playing the roles of $X^r$, $X$, $r$. 
To obtain the nonstandard WIP with
variance $\bar h^{-1}\bar R^{-1}\sigma^2$, we must show that
\begin{enumerate}[(I1)]
\item[(I1)] $v_h$ satisfies a nonstandard WIP with variance $\bar R^{-1}\sigma^2$;
\item[(I2)] $(n\log n)^{-1/2}\max\limits_{0\le j\le n}|v_h|\circ f_\Gamma^j\to_{\mu_\Gamma}0$ as $n\to\infty$;
\item[(I3)] $(n\log n)^{-1/2}\sup\limits_{t\in [0,n]}|Q|\circ g_t\to_{\mu^h}0$ as $n\to\infty$;
\end{enumerate}
on the probability space $(\Gamma,\mu_\Gamma)$.
Condition~(I1) follows from Theorem~\ref{thm:map}.
Conditions~(I2) and~(I3) are trivial since 
$|v_h|,\,|Q(x,u)|\le |v|_\infty |h|_\infty<\infty$.

\vspace{1ex} \noindent (b)
It is convenient to regard $\Gamma^h$ as a suspension $Z^\varphi$ over $Z$ with unbounded roof function
$\varphi(z)=\int_0^{R_\tau(z)}h(z,u)\,du$.
Define the further induced observable 
\[
V_\tau:Z\to\R\,, \qquad 
V_\tau(z)=\int_0^{\varphi(z)} v(z,u)\,du=\sum_{\ell=0}^{\tau(z)-1}V(z,\ell).
\]
We apply Theorem~\ref{thm:induceflow}
with $\Gamma^h$, $Z$, $\varphi$ playing the roles of $X^r$, $X$, $r$. 
To obtain the standard CLT, we must show that
\begin{itemize}
\item[(I1)] $V_\tau$ satisfies a standard WIP;
\item[(I2)] $n^{-1/2}\max\limits_{0\le j\le n}|V_\tau|\circ F^j\to_{\mu_Z}0$ as $n\to\infty$;
\end{itemize}
on the probability space $(Z,\mu_Z)$.

Condition~(I1) is verified by the argument in
the proof of Theorem~\ref{thm:map}(b).
Since $V_\tau\in L^p$, $p>2$, it follows that
$\big|\max\limits_{0\le j\le n}|V_\tau|\circ F^j\big|_p\ll n^{1/p}$,
verifying condition~(I2).
\end{proof}

\section{Applications to intermittent maps}
\label{sec:PM}

In this section, we consider some intermittent map examples.
In these examples, $f:X\to X$ is a piecewise $C^1$ map with domain $X=[0,1]$ or $X\subset [0,1]\times\T$ and there is a unique absolutely continuous mixing $f$-invariant probability measure $\mu$. 
There is a stipulated subset $Y\subset X$ with $\mu(Y)>0$, first return time
$R:Y\to\Z^+$ and first return map $f^R:Y\to\R$.
We consider a H\"older observable $v:X\to\R$ with $\int_X v\,d\mu=0$
and define the induced observable $V=\sum_{\ell=0}^{R-1}v\circ f^\ell:Y\to\R$.
Without loss of generality we assume that $v$ is $C^\eta$ for some 
$\eta\in\left(0,\frac12\right)$.

At the end of the section, we consider almost Anosov flows in Example~\ref{ex:Bruin}.

\begin{example}[The LSV map] \label{ex:LSV}
Let $X=[0,1]$.
The simplest example of an intermittent map $f:X\to X$ is the LSV map~\cite{LSV99} given by
\[
fx=\begin{cases} x(1+2^{1/\alpha}x^{1/\alpha}) ,& 0\le x<\frac12 \\
2x-1 , &\frac12\le x\le 1. \end{cases}
\]
There is a unique absolutely continuous $f$-invariant probability measure $\mu$ for all $\alpha>1$.
It is well-known that H\"older observables $v:X\to\R$ satisfy the standard WIP when $\alpha>2$.
See~\cite{MZ15} for functional limit theorems when $\alpha\in(1,2)$.
Here we focus on the case $\alpha=2$. 
By~\cite{Gouezel04} (see also~\cite{Zweimuller03}), $v$ satisfies a nonstandard CLT if $v(0)\neq 0$ and a standard CLT otherwise.
The nonstandard WIP for $v(0)\neq0$ is proved in~\cite{DedeckerMerlevede09}.

We now indicate how to recover the nonstandard WIP using the results in this paper.
The situation is greatly simplified from Section~\ref{sec:map}: we can take $Y=\Delta=\bDelta=Z=\bZ$  (and $\tau=1$). The common space is denoted $Y$ here
and we take
$Y=\left[\frac12,1\right]$.
By~\cite{LSV99}, $\mu(R=n)\sim \tfrac12\sigma_R^2n^{-3}$ for some $\sigma_R^2>0$.
It is standard (see~\cite{LSV99} or~\cite[Proof of Theorem~1.3]{Gouezel04}) that $f^R$ is a full-branch Gibbs-Markov map. We have a semiconjugacy $\pi_\Gamma:\Gamma\to X$ where $(f_\Gamma,\Gamma,\mu_\Gamma)$ is a one-sided Young tower with
$\Gamma=\{(y,\ell):y\in Y,\,0\le\ell<R(y)\}$.

A calculation~\cite[Proof of Theorem~1.3]{Gouezel04} shows that
$V=K+H$ where $K=Rv(0)$ and $H=O(R^{1-2\eta})$.
By~\cite{Gouezel04}, $R$ satisfies a nonstandard CLT with variance $\sigma_R^2$.
Hence, for $v(0)\neq0$, we obtain the nonstandard WIP with variance
$\bar R^{-1}v(0)^2\sigma_R^2$ by Corollary~\ref{cor:map}(a).
\end{example}

\begin{example}[An example with two neutral fixed points]
We consider an example studied in~\cite{CFKM20} with $X=[0,1]$ and
\[
fx=\begin{cases} x(1+3^{1/2}x^{1/2}) , &x\in\left[0,\tfrac13\right) \\
3x-1 , &x\in\left[\tfrac13,\tfrac23\right) \\
1-(1-x)(1+3^{1/2}(1-x)^{1/2}) , &x\in\left[\tfrac23,1\right]. \end{cases}
\]
As in Example~\ref{ex:LSV}, we can take $Y=\Delta=\bDelta=Z=\bZ$ (and $\tau=1$),
and we choose $Y=\left[\tfrac13,\tfrac23\right]$.
By~\cite[Proof of Lemma~6.3]{CFKM20}, 
$\mu(R>n)\sim \sigma_R^2n^{-2}$ for some $\sigma_R^2>0$ and by symmetry
$\mu(R1_{(\frac13,\frac12)}>n) =\mu(R1_{(\frac12,\frac23)}>n)
\sim \tfrac12 \sigma_R^2 n^{-2}$.
The first return map $F=f^R:Y\to Y$ is a full-branch Gibbs-Markov map.
Moreover, $V=K+H$ where 
$K=R1_{(\frac13,\frac12)}v(0)+R1_{(\frac12,\frac23)}v(1)$ 
and $H=O(R^{1-2\eta})$.
It follows that 
$\mu(|K|>n)\sim \sigma_K^2 n^{-2}$ where $\sigma_K^2=\tfrac12\sigma_R^2(v(0)^2+v(1)^2)$. If $\sigma_K^2>0$, then it is a consequence of Theorem~\ref{thm:GM} that
$K$ satisfies the nonstandard WIP with variance $\sigma_K^2$.
By Corollary~\ref{cor:map}, we obtain a standard CLT if $v(0)=v(1)=0$ and the nonstandard WIP with variance $\bar R^{-1}\sigma_K^2$ otherwise.
\end{example}

\begin{example}[NonMarkovian examples] \label{ex:AFN}
Zweim\"uller~\cite{Zweimuller98,Zweimuller00}  studied a class of nonMarkovian interval maps
$f : X \to X$ with indifferent fixed points, called AFN maps.
For definiteness, we focus on the example
\begin{equation} \label{eq:PM}
fx=x+b x^{3/2} \bmod1
\end{equation}
for $b\in[1,\infty)$.
Note that 
$f$ has $[b]+1$ branches and
there is a neutral fixed point at $0$.
When $b$ is an integer, we can proceed as in Example~\ref{ex:LSV} so we are particular interested in the case where $b$ is not an integer and hence
 $f$ is nonMarkovian.
	(The methods described here apply more generally to mixing AFN maps.)
We only sketch the details since
a more complicated example is treated in Example~\ref{ex:EMV} below.

We choose $Y=[y_0,1]$ where $y_0<1$ is maximal such that $fy_0=0$.
It is easily checked that $\mu(R>n)\sim \sigma_R^2 n^{-2}$ for some $\sigma_R^2>0$.
Although $f^R$ is not Markov (hence not Gibbs-Markov) 
 it is well-known that the transfer operator for $f^R$ has a spectral gap on the space of bounded variation functions and hence $R$ satisfies a nonstandard CLT~\cite[Remark~C.2]{EMV21}.

The same calculation as in Example~\ref{ex:LSV} shows that
$V=v(0)R+H$ where $H=O(R^{1-2\eta})$.
This already yields the nonstandard CLT for $v$ when $v(0)\neq0$.

A standard technique~\cite{Zweimuller98,Zweimuller00} for studying AFN maps is to reinduce the first return map $f^R:Y\to Y$ to obtain a Gibbs-Markov map
$F:Z\to Z$. 
(See also~\cite{BruinTerhesiu18}.)
We can then apply Corollary~\ref{cor:map} to obtain the nonstandard WIP when $v(0)\neq0$ and a typically nondegenerate standard CLT when $v(0)=0$.
\end{example}

\begin{example}[A multidimensional nonMarkovian example]
\label{ex:EMV}

To illustrate the generality of the techniques in this paper with regard to intermittent maps, we consider a family of multidimensional nonMarkovian nonconformal intermittent maps introduced by
Eslami~\emph{et al.}~\cite{EMV21}.
Let $X_0=[0,1]\times \T$ with $\T=\R/\Z$ and define the map
$f:X_0\to X_0$ given by
$f(x,\theta)=(f_1(x,\theta),f_2(\theta))$ where
\[
        f_1(x,\theta)=\begin{cases} x(1+x^{1/\alpha}u(x,\theta)) , &0\le x\le \frac34 \\
4x-3 , &\frac34<x\le1 \end{cases},
\qquad 
f_2(\theta)=4\theta\bmod1.
\]
Here, $u:\left[0,\frac34\right]\times\T\to(0,\infty)$ is $C^2$ with $u(0,\theta)\equiv c_0>0$ such that $x(1+x^{1/\alpha}u(x,\theta)){\le 1}$ on $\left[0,\frac34\right]\times\T$.
There is a neutral invariant circle $\{x=0\}$ and we require that
$f$ is uniformly expanding outside any neighbourhood of this circle.
There are further technical assumptions on $u$ in~\cite{EMV21} that we do not write down here.
As shown in~\cite{EMV21}, $f$ restricts to a mixing transformation on 
$X=f\left(\left[0,\tfrac34\right]\times\T\right)$.
In~\cite{EMV21}, the cases $\alpha\in(0,2)$ and $\alpha>2$ are studied extensively, but the case $\alpha=2$ is largely omitted since Theorem~\ref{thm:GM} was not available. Here, we cover this missing case.

Following~\cite{EMV21}, we take $Y=X\cap\left(\left[\tfrac34,1\right]\times\T\right)$.
By~\cite[Proposition~4.11]{EMV21}, 
$\mu(R=n)\sim \frac12\sigma_R^2 n^{-3}$ for some $\sigma_R^2>0$.
By~\cite[Corollary~4.10]{EMV21}, the transfer operator for the first return map $f^R:Y\to Y$ has a spectral gap in a space of two-dimensional bounded variation functions, and hence we can apply~\cite[Remark~C.2]{EMV21} to obtain a nonstandard CLT with variance $\sigma_R^2$ for $R$.

By the proof of~\cite[Theorem~1.3, Section~6]{EMV21}, 
$V=K+H$, $K=I_vR$,
where $I_v=\int_\T v(0,\theta)\,d\theta$ and $H=O(R^{1-2\eta})$.
In this way, we already obtain that $v$ satisfies the nonstandard CLT with variance $\bar R^{-1}I_v^2\sigma_R^2$ if $I_v\neq0$ and a typically nondegenerate standard CLT if $I_v=0$.

By~\cite[Lemma~3.1]{EMV21}, 
we can reinduce the first return map $f^R:Y\to Y$ to obtain a Gibbs-Markov map
$F:Z\to Z$ such that the return time for returns of $f^R$ to $Z$ has exponential tails. Hence we are in the situation of Section~\ref{sec:map}. 
By Corollary~\ref{cor:map}, we obtain the nonstandard WIP if $I_v\neq0$.
\end{example}

\begin{example}[Almost Anosov flows]
\label{ex:Bruin}

The notion of intermittent map extends to diffeomorphisms (almost Anosov diffeomorphisms~\cite{Hu00,HuYoung95}) and flows (almost Anosov flows~\cite{Bruin23,BruinTerhesiuTodd21}). In general, it is a highly nontrivial task to obtain regularly varying tails for return times in these settings, but this was accomplished by Bruin \emph{et al.}~\cite{Bruin23,BruinTerhesiuTodd21} for certain classes of almost Anosov flows.
They were then able to exploit the regularly varying tails to deduce the nonstandard CLT and convergence to stable laws. 

The functional versions of these limit laws were not considered in~\cite{Bruin23,BruinTerhesiuTodd21}. However, it was pointed out in~\cite{BruinTerhesiuTodd21} that the alternative approach in~\cite{MV20} could have been used to deduce stable laws from the regularly varying tails; this would moreover have immediately yielded the corresponding functional stable laws. 
Likewise, our method in this paper immediately yields the nonstandard WIP in all the cases in~\cite{Bruin23,BruinTerhesiuTodd21} where they prove the nonstandard CLT.
\end{example}

\section{Applications to dispersing billiards}
\label{sec:billiard}

In this section, we provide details and proofs for the billiard examples mentioned in Section~\ref{sec:intro}.
For background material on billiards, we refer to~\cite{ChernovMarkarian}.
The billiard domain, denoted by $Q$, is a compact connected subset of $\R^2$ or $\T^2$ with piecewise smooth boundary and the billiard flow $f_t$ is defined on $Q\times S^1$. Fix a point $q\in Q$ and a unit vector $\psi\in S^1$.
Then $q$ moves in straight lines with unit speed in direction $\psi$ until reflecting (angle of reflection equalling the angle of incidence) off the boundary $\partial Q$. This defines a volume-preserving flow.
A natural Poincar\'e section is given by $X=\partial Q\times[-\pi/2,\pi/2]$ corresponding to collisions with $\partial Q$ (with outgoing velocities in $[-\pi/2,\pi/2]$).  The Poincar\'e map $f:X\to X$ is called the {\em collision map} or the {\em billiard map}.  It preserves a probability measure $\mu$, equivalent to Lebesgue, called Liouville measure.

A general framework introduced by~\cite{Markarian04} and explored further 
in~\cite{ChernovZhang05} is to model a suitable first return map by an
exponential Young tower $\Delta$
as described in Section~\ref{sec:map}.
More precisely, one chooses\footnote{Roughly speaking, $Y$ is chosen to be a subset of $X$ bounded away from the regions where hyperbolicity is expected to break down. E.g.\  for billiards with cusps, $Y$ excludes a neighbourhood of each cusp.
}
 a positive measure set $Y\subset X$ with first return time $R:Y\to\Z^+$ and first return map 
$f_Y=f^R:Y\to Y$.  Then it is shown that there is an exponential tower $f_\Delta:\Delta\to\Delta$ and a measure-preserving semiconjugacy $\pi_\Delta:\Delta\to Y$ such that (dynamically) H\"older observables $V:Y\to\R$ lift to dynamically H\"older observables $V\circ\pi_\Delta:\Delta\to\R$.

With this structure in place, we can now construct a (nonexponential) Young tower $f_\Gamma:\Gamma\to\Gamma$ and a measure-preserving semiconjugacy $\pi_\Gamma:\Gamma\to X$ such that (dynamically) H\"older observables $v:X\to\R$ lift to dynamically H\"older observables $v\circ\pi_\Gamma:\Gamma\to\R$.
In this way, statistical limit laws for the billiard map $f:X\to X$ reduce to statistical limit laws on $\Gamma$ as described in Section~\ref{sec:map}.

If moreover the flow time between collisions in $X$ is H\"older and bounded below, then these statistical limit laws for $f:X\to X$ lift to statistical limit laws for the billiard flow $f_t$ as described in Section~\ref{sec:flow}.

\begin{example}[Billiards with cusps]
These are billiard domains $Q\subset\R^2$ where $\partial Q$ is a simple closed curve consisting of finitely many convex inwards $C^3$ curves with nonvanishing curvature such that the interior angles at corner points are zero.
By~\cite[Theorem~1.1]{ChernovMarkarian07}, the billiard map $f:X\to X$ falls into the framework of Section~\ref{sec:map}.
In~\cite{BalintChernovDolgopyat11},
it is shown that there is a constant $I_v\in\R$, given explicitly in terms of the values of the observable $v$ near the cusp, such that
a nondegenerate WIP holds for $I_v\neq0$ and a standard CLT holds for $I_v=0$.

Our methods give a more streamlined approach which leads to exactly the same results as in~\cite{BalintChernovDolgopyat11}.
The main step is a nonstandard CLT for $R$ which is established in~\cite[Eq~(2.5)]{BalintChernovDolgopyat11}. 
Let $v:X\to\R$ be a H\"older observable and define the first return observable $V=\sum_{\ell=0}^{R-1}v\circ f^\ell:Y\to\R$.
When there is a single cusp, $V=K+H$ where 
$K=I_v R$, and $H=O(R^{1-\delta})$ for some $\delta>0$.
(Such a decomposition is implicit in~\cite[Eq.~(6.16)]{BalintChernovDolgopyat11} and nearby calculations. A similar decomposition for billiards with flat cusps is computed explicitly in~\cite[End of Section~6]{JungZhang18} and~\cite[Proposition~8.1]{MV20}.)
Hence the hypotheses of Theorem~\ref{thm:map} are satisfied and all the remaining results in~\cite{BalintChernovDolgopyat11} are seen to hold independently of the details of the billiard model.

When there are several cusps, the situation is similar with $K$ piecewise constant (at the level of the tower $\Delta$) equal to a constant multiple of $R$ in a neighbourhood of each cusp. (See~\cite{JungPeneZhang20} for an explicit calculation in the situation of several flat cusps.)

The roof function for the billiard flow is not bounded below and it can be shown that the flow
mixes faster than the billiard map (at least superpolynomially quickly~\cite{BM18}).
 As a consequence of this, the standard CLT and WIP (and moreover the almost sure invariance principle) hold for H\"older observables of the billiard flow~\cite{BM18}. The variance is typically nonzero.
\end{example}

\begin{example}[Bunimovich stadia~\cite{Bunimovich79}]
\label{ex:stadia}
These are convex billiard domains $Q\subset\R^2$ where $\partial Q$
is a simple closed curve consisting of 
two semicircles $C_1,\,C_2$ of radius $1$ and
two parallel line segments $S_1,\,S_2$ of length $L$ tangent to the semicircles.

By Markarian~\cite{Markarian04},
the billiard map $f:X\to X$ falls within the Young tower framework of Section~\ref{sec:map}.
Let $\hC_j=C_j\times(-\pi/2,\pi/2)\subset X$, $j=1,2$, and set 
\[
Y=\big(\hC_1 \setminus f\hC_1\big)\cup
\big(\hC_2 \setminus f\hC_2\big).
\]
Then~\cite{Markarian04} verifies the Chernov axioms~\cite{Chernov99} showing that $f_Y=f^R:Y\to Y$ is modelled by a Young tower with exponential tails.

The first return time $R:Y\to\Z^+$ decomposes into $R=R_{\rm slide}+R_{\rm bounce}$ as follows.
Suppose $x\in\hC_1\setminus f\hC_1$. The trajectory of $x$ slides along the semicircle $C_1$ for
$R_{\rm slide}\ge0$ iterates and then bounces between the line segments $S_1$ and $S_2$ for $R_{\rm slide}\ge0$ iterates before returning to $Y$. Similarly for $x\in\hC_2\setminus f\hC_2$.
Elementary geometric arguments~\cite{BalintGouezel06,Markarian04} show that
$\mu(R_{\rm bounce}=n) \sim c n^{-3}$ for some $c>0$ and
$\mu(R_{\rm slide}=n) =O(n^{-4})$.
In particular, 
$R\in L^p(Y)$ for all $p<2$ and
$R_{\rm slide}\in L^p(Y)$ for all $p<3$.

Let $v:X\to\R$ be a dynamically H\"older observable and let $V=\sum_{\ell=0}^{R-1}v\circ f^j$. Define 
\[
I_v=\frac{1}{2L}\int_{S_1\cup S_2}v(q,0)\,dq\,;
\]
this is the average of $v$ over trajectories bouncing perpendicular to the straight edges.

\begin{lemma} \label{lem:stadium}
$V=I_vR+H$ where $H\in L^{2+\eps}(Y)$ for some $\eps>0$.
\end{lemma}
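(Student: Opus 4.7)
The plan is to split the first-return observable according to the two phases of the trajectory---\emph{sliding} along a semicircle and \emph{bouncing} between the straight segments---and show that the sliding contribution lies in $L^{2+\eps}$ outright, while the bouncing contribution equals $I_vR_{\rm bounce}$ up to a pointwise error of order $R_{\rm bounce}^{1-\delta}$ for some $\delta>0$. Write $V=V_{\rm slide}+V_{\rm bounce}$ with
\[
V_{\rm slide}(x)=\sum_{\ell=0}^{R_{\rm slide}(x)-1}v(f^\ell x),\qquad V_{\rm bounce}(x)=\sum_{\ell=R_{\rm slide}(x)}^{R(x)-1}v(f^\ell x).
\]
The crude bound $|V_{\rm slide}-I_vR_{\rm slide}|\le(|v|_\infty+|I_v|)R_{\rm slide}$ together with $\mu(R_{\rm slide}>n)=O(n^{-3})$ (obtained from the cell estimate $\mu(R_{\rm slide}=n)=O(n^{-4})$) places $V_{\rm slide}-I_vR_{\rm slide}$ in $L^p(Y)$ for every $p<3$, so it is certainly in $L^{2+\eps}$ for small $\eps$.

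For the bouncing term the analysis is purely geometric. During the bouncing phase the angle to the normal $|\theta_k|=\theta_0$ is constant, successive collisions alternate between $S_1$ and $S_2$, and, since the two segments lie at distance $2$, the horizontal footprint shifts by $2\tan\theta_0$ at each collision. The phase ends precisely when the ball exits the strip at the opposite end, so the total horizontal travel over the $R_{\rm bounce}$ collisions equals $L$ up to an $O(1)$ boundary effect, giving the key identification $\tan\theta_0\approx L/(2R_{\rm bounce})$. In particular, the collision points $\{q_k\}$ form two arithmetic progressions on $S_1$ and $S_2$ of common spacing $4\tan\theta_0\approx L/R_{\rm bounce}$ that together fill $S_1\cup S_2$.

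Two H\"older-type estimates then finish the job. First, the dynamical H\"older regularity of $v$ (applied to the two collisions $(q_k,\theta_k)$ and $(q_k,0)$, whose future trajectories remain close for a definite number of iterates when $\theta_0$ is small) yields $|v(q_k,\theta_k)-v(q_k,0)|\ll \theta_0^{\eta_0}\ll R_{\rm bounce}^{-\eta_0}$ for some $\eta_0>0$, contributing a total of $O(R_{\rm bounce}^{1-\eta_0})$ over the $R_{\rm bounce}$ bounces. Second, a standard Riemann-sum bound for the H\"older function $q\mapsto v(q,0)$ on the two arithmetic progressions gives
\[
\sum_{k=0}^{R_{\rm bounce}-1}v(q_k,0)=\frac{R_{\rm bounce}}{2L}\int_{S_1\cup S_2}v(q,0)\,dq+O(R_{\rm bounce}^{1-\eta_1})=I_vR_{\rm bounce}+O(R_{\rm bounce}^{1-\eta_1})
\]
for some $\eta_1>0$. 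Combining these gives $|V_{\rm bounce}-I_vR_{\rm bounce}|\ll R_{\rm bounce}^{1-\delta}$ for some $\delta>0$. Since $\mu(R_{\rm bounce}>n)\ll n^{-2}$, this pointwise bound places $V_{\rm bounce}-I_vR_{\rm bounce}$ in $L^p$ for every $p<2/(1-\delta)$, hence in $L^{2+\eps}$ for sufficiently small $\eps>0$. Summing the two contributions yields $H=V-I_vR\in L^{2+\eps}$.

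The principal obstacle is translating the abstract ``dynamically H\"older'' hypothesis (phrased through the lift to the Young tower $\Gamma$) into the concrete local H\"older estimates in the collision variables $(q,\theta)$ used above; once this is granted, the remainder is geometric bookkeeping, with a few edge cases (trajectories with very small $R_{\rm bounce}$, or whose initial collision is near a junction between a semicircle and a straight segment) easily absorbed by the trivial bound $|V-I_vR|\le(|v|_\infty+|I_v|)R$.
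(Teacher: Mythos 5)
Your proof is correct and follows essentially the same route as the paper: the same decomposition $V=V_{\rm slide}+V_{\rm bounce}$, the trivial $L^{p}$ ($p<3$) bound on the sliding part, and the Riemann-sum identification $V_{\rm bounce}=I_vR_{\rm bounce}+O(R_{\rm bounce}^{1-\delta})$ combined with $\mu(R_{\rm bounce}>n)\ll n^{-2}$ (the paper simply delegates the geometric details to B\'alint--Gou\"ezel, which you instead spell out). The only quibble is the spacing of the collision points on each segment, which is $4\tan\theta_0\approx 2L/R_{\rm bounce}$ rather than $L/R_{\rm bounce}$; this constant is immaterial and your final Riemann-sum formula is the correct one.
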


\begin{proof}
This is~\cite[Line 1~of proof of Lemma~5.3]{BalintGouezel06}.
In more detail,
write $V=V_{\rm slide}+V_{\rm bounce}$ where
\[
V_{\rm slide}=\sum_{\ell=0}^{R_{\rm slide}-1}v\circ f^\ell, \quad
V_{\rm bounce}=\sum_{\ell=R_{\rm slide}}^{R-1}v\circ f^\ell.
\]
Using the H\"older continuity of $v$, we can view $V_{\rm bounce}$ as a Riemann sum approximating the integral $I_v$ to obtain
\[
V_{\rm bounce}= I_v R_{\rm bounce} + O(R_{\rm bounce}^{1-\delta})
= I_v R + O(R_{\rm slide}) + O(R^{1-\delta}),
\]
for some $\delta>0$. 
Clearly, $|V_{\rm slide}|\le |v|_\infty R_{\rm slide}$,
so 
\[
V = I_v R + H\,,\qquad |H|\ll R_{\rm slide}+R^{1-\delta}.
\]
This completes the proof.
\end{proof}

By the proof of~\cite[Theorem~1.1]{BalintGouezel06}
(see in particular~\cite[page 504, line 11]{BalintGouezel06}),
the first return time $R:Y\to\Z^+$ 
(denoted there by $\varphi_+$) satisfies
a nonstandard CLT with variance $\sigma_R^2>0$.
Since $K=I_vR$ is a scalar multiple of $R$, it is immediate that 
the hypotheses of Theorem~\ref{thm:map} are satisfied.
We conclude that if $I_v\neq0$, then 
$v$ satisfies a nonstandard WIP with variance
$\sigma^2=\bar R^{-1}I_v^2\sigma_R^2$; and if $I_v=0$, then 
$v$ satisfies a typically nondegenerate standard CLT.

Next, we consider the billiard flow with roof function $h$.
Given a H\"older observable, we define  
$v_h:X\to\R$ and $V:Y\to\R$ as in Theorem~\ref{thm:flow}.
Then
$V=K+H$ with $K=J_vR$ and $H=L^{2+\eps}(Y)$ for some $\eps>0$ where
\[
J_v=\frac{1}{2L}\int_{S_1\cup S_2}v_h(q,0)\,dq=
\frac{1}{2L}\int_{(S_1\cup S_2)\times\left[-\tfrac{\pi}{2},\tfrac{\pi}{2}\right]}v\,d\Leb.
\]
Hence we obtain the nonstandard WIP ($J_v\neq0$) and
the standard CLT ($J_v=0$) for the billiard flow by Theorem~\ref{thm:flow}.

The CLTs are not new,
so our contribution beyond~\cite{BalintGouezel06} is the nonstandard WIP for the billiard map (when $I_v\neq0$) and flow (when $J_v\neq0$).
\end{example}

\begin{rmk} A further example of fundamental importance is the planar infinite horizon Lorentz gas, for which
the nonstandard WIP was studied extensively by~\cite{SzaszVarju07}.
In some sense, the situation is simpler than the other billiards examples, but the observable of interest is the displacement function which is vector-valued.
This raises extra technical issues which we do not address here.
(Our methods easily yield the nonstandard WIP for components of the displacement function.)
\end{rmk}

\paragraph{Acknowledgements}
This research was supported in part by Instituto Serrapilheira,
grant ``Jangada Din\^{a}mica: Impulsionando Sistemas Din\^{a}micos na Regi\~{a}o Nordeste''.
CM and IM are grateful for the hospitality of Universidade Federal do Ceará (UFC), where
much of this work was carried out.
YL was also supported by CNPq grant 308101/2023-5, FUNCAP grant UNI-0210-00288.01.00/23 and FAPESP grant number 2025/11400-7.
We are grateful to P\'eter B\'alint for helpful discussions regarding the Bunimovich stadium example.
We are also grateful to the referees for their helpful comments and suggestions.

\paragraph{Data availability statement}
Data sharing is not applicable to this article as no datasets were generated or analysed during the current study.

\paragraph{Conflict of interest statement}
The authors have no conflict of interests to declare that are relevant to the content of this article.

\end{document}